\newtheorem{theorem}{Theorem}[section]
\newtheorem{definition}[theorem]{Definition}
\newtheorem{lemma}[theorem]{Lemma}
\newtheorem{proposition}[theorem]{Proposition}
\theoremstyle{definition}
\newtheorem{examples}[theorem]{Examples}
\newtheorem{remark}[theorem]{Remark}
\newtheorem{remarks}[theorem]{Remarks}
\newtheorem{problem}[theorem]{Problem}
\newcommand{\N}{\mathbb{N}}
\newcommand{\R}{\mathbb{R}}
\def\N{\mathbb N}
\def\R{\mathbb R}
\def\syd{\,\Delta\,}
\def\xnj{X_j^n}
\def\xnjn{X^n_{j_n}}
\def\xnk{X_k^n}
\def\yni{Y_i^n}
\def\qij{Q^n_{ij}}
\def\qik{Q^n_{ik}}
\def\eps{\varepsilon}
\def\f{f^*}
\newcommand{\ds}{\mathrm{\,d}s}
\newcommand{\dx}{\mathrm{\,d}x}
\newcommand{\dy}{\mathrm{\,d}y}
\newcommand{\dz}{\mathrm{\,d}z}
\newcommand{\dmu}{\mathrm{\,d}\mu}
\newcommand{\dnu}{\mathrm{\,d}\nu}
\begin{document}

\title[Lorentz and Gale-Ryser theorems on general measure spaces]{Lorentz and Gale-Ryser theorems  \\ on general measure spaces}

\author[S. Boza]{Santiago Boza$^*$}

\address{Department of  Mathematics, EETAC, Polytechnical University of Catalonia, 08860 Castelldefels, Spain.}
\email{santiago.boza@upc.edu}

\author[M.  K\v repela]{Martin K\v repela$^{**}$}
\address{Czech Technical University in Prague, Faculty of Electrical Engineering, Department of Mathematics, Technick\'a~2, 166~27 Praha~6, Czech Republic.}
\email{martin.krepela@fel.cvut.cz}

\author[J. Soria]{Javier Soria$^{*}$}

\address{Interdisciplinary Mathematics Institute (IMI), Department of Analysis and Applied Mathematics, Complutense University  of Madrid, 28040 Madrid, Spain.}
\email{javier.soria@ucm.es}

\thanks{$^*$The author was partially  supported by the Spanish Government grant MTM2016-75196-P (MINECO / FEDER, UE).\\
$^{**}$The author was supported by the project OPVVV CAAS CZ.02.1.01/0.0/0.0/16\_019/0000778.
}

\subjclass[2010]{26D15, 28A35, 46E30}

\keywords{Cross sections; nonincreasing rearrangement; Hardy-Littlewood-P\'olya relation}

\begin{abstract}
Based on the Gale-Ryser theorem \cite{ga:ga,ry:ry}, for the existence of suitable $(0,1)$-matrices for different partitions of a natural number, we revisit the classical result of G. G. Lorentz \cite{lor} regarding the characterization of a plane measurable set, in terms of its cross sections, and extend it to general measure spaces.
\end{abstract}

\maketitle


\section{Introduction}
In \cite{lor}, G. G. Lorentz fully characterized the existence of a plane set in terms of its cross section. The main result reads as follows:

\begin{theorem}\label{planeset}
Suppose that $P(x)$, $Q(y)$ are two non-negative integrable functions defined for $- \infty <x < + \infty$, $- \infty < y < + \infty$. In order that
a measurable set $A$ with cross functions $P(x)$, $Q(y)$ exists, it is necessary
and sufficient that the non-increasing rearrangements $p(x)$, $q (y)$ of these functions satisfy the conditions:
\begin{align}
\int_0^xp(u)\,du&\le\int_0^xq^{-1}(u)\,du,\qquad x>0,\label{pq}\\
\int_0^xq(u)\,du&\le\int_0^xp^{-1}(u)\,du,\qquad x>0.\label{qp}
\end{align}
\end{theorem}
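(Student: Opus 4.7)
Assume $A$ exists with cross sections $P,Q$. For any measurable $E\subset\R$ with $|E|=x$, Fubini together with the pointwise bound $|A_y\cap E|\le\min(Q(y),x)$, where $A_y=\{s:(s,y)\in A\}$, yields
$$\int_E P(s)\,ds = \int_\R |A_y\cap E|\,dy \le \int_\R \min(Q(y),x)\,dy = \int_0^x q^{-1}(s)\,ds,$$
the last equality being a layer-cake computation that uses only the distribution of $Q$. Taking the supremum over $E$ with $|E|=x$ on the left produces $\int_0^x p(u)\,du$ via the classical identity $\sup_{|E|=x}\int_E P = \int_0^x p(u)\,du$, which proves \eqref{pq}; the inequality \eqref{qp} follows by exchanging the roles of $P$ and $Q$.

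\textbf{Sufficiency.} The strategy is to discretize and invoke the finite Gale-Ryser theorem. For each $N\in\N$, partition a large interval containing the effective supports of $P$ and $Q$ into subintervals $I_j^N$ of length $1/N$, and set $r_j^N$ to be an integer close to $N^2\int_{I_j^N}P$, with $c_k^N$ defined analogously from $Q$; adjust the roundings so that $\sum_j r_j^N=\sum_k c_k^N$. A short computation shows that \eqref{pq} and \eqref{qp} translate, up to error $o_N(1)$, into the Gale-Ryser majorization criterion for $(r_j^N)$ and $(c_k^N)$, since the conjugate partition of the decreasing reordering of $(r_j^N)$ is essentially a discretization of $N^2 q^{-1}$. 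Hence \cite{ga:ga,ry:ry} yield a $(0,1)$-matrix $M^N$ realizing these row and column sums, which we encode as a measurable set $A^N\subset\R^2$, a union of squares of side $1/N$, whose cross sections are step-function approximations of $P$ and $Q$.

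\textbf{Passage to the limit.} By weak-$*$ compactness in $L^\infty(\R^2)$, a subsequence of $\chi_{A^N}$ converges weakly-$*$ to some $\phi$ with $0\le\phi\le 1$ and marginals exactly $P$ and $Q$. The step I expect to be most delicate is that $\phi$ need not be $\{0,1\}$-valued, so $\phi$ itself is not the characteristic function of a set. To extract an honest set $A$ I would invoke an extreme-point argument: the convex, weak-$*$ compact set $\mathcal K$ of functions in $L^\infty(\R^2;[0,1])$ with marginals $(P,Q)$ is nonempty (since $\phi\in\mathcal K$), and its extreme points are precisely characteristic functions of measurable sets, so Krein-Milman supplies the desired $A$. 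The general measure-space version announced by the paper's title should follow by replacing interval partitions with measurable partitions of the underlying space into pieces of small measure, without altering the skeleton of the argument.
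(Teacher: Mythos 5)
Your necessity argument is sound and is essentially the paper's Proposition~\ref{hlpineq}: the Hardy--Littlewood bound $\sup_{|E|=x}\int_E P=\int_0^x p$ together with Fubini gives precisely inequality~\eqref{HLP}. The sufficiency argument, however, takes a genuinely different route from the paper. The paper starts from the hypograph $E_0=\{(x,y):x<g(y)\}$ and iteratively performs horizontal ``swaps'' of dyadic squares (Definitions~\ref{D:swap}--\ref{D:sw-sq}), controlling $\|f-v_{E_n}\|_1$ at each step to force convergence in measure to a set with the desired vertical cross section; there is no passage through the discrete Gale--Ryser theorem and no compactness or extreme-point argument.

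The genuine gap in your proposal is exactly where you flagged it: the claim that every extreme point of
$\mathcal{K}=\{\phi\in L^\infty(\R^2;[0,1]):\text{marginals }P,Q\}$
is a characteristic function is not proved, and it is \emph{false} without a nonatomicity hypothesis. Remark~\ref{remarks} of the paper already gives the counterexample in miniature: take $X=\{1,2\}$, $Y=\{1\}$ with counting measure, $P\equiv 1/2$, $Q(1)=1$; then the Lorentz inequalities hold, $\mathcal{K}$ is a single point $\phi\equiv 1/2$ (hence extreme), yet it is not $\{0,1\}$-valued and no realizing set exists. This shows that Krein--Milman alone cannot close the argument; whatever makes the extreme points $\{0,1\}$-valued must use the nonatomic structure of Lebesgue measure, via some measurable ``alternating cycle'' perturbation, and that is precisely the hard analytic content that the paper's swapping construction is built to supply. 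The same example also defeats your final sentence: the ``general measure-space version'' does \emph{not} follow by replacing intervals with measurable partitions, since in the atomic case the cross sections are forced to take values in the range of the measure, an obstruction your scheme never sees. A lesser but real issue is the discretization step: the Gale--Ryser criterion must hold exactly, not ``up to $o_N(1)$,'' and one has to show that the roundings can be adjusted so that both $\sum_j r_j^N=\sum_k c_k^N$ and the dominance inequalities remain valid for every partial sum (near cases of equality in~\eqref{pq}--\eqref{qp} this needs a careful, directionally consistent rounding scheme, not a generic perturbation).
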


In modern terminology, the nonincreasing rearrangement of a function $f$ on a measurable space $(X,\mu)$ is defined as 
$$
f^*(t)=\inf\{s>0:\lambda_f(s)\le t\},
$$
where
$$
\lambda_f(s)=\mu(\{x\in X:|f(y)|>s\})
$$
is the distribution function of $f$  (see \cite{BS} for standard definitions and classical properties in this setting). It is worth to mention that, according to Lorentz's notation, we have that $p^{-1}(u)=\lambda_p(u)$. It is also proved in \cite{lor} that \eqref{pq} and \eqref{qp} are equivalent to \eqref{pq} and the condition $\|P\|_1=\|Q\|_1$.

\medskip

A few years later, D. Gale and H. J. Ryser, studying some graph theoretical  conditions for degree sequences on simple graphs \cite{sh:sh}, proved in  \cite{ga:ga,ry:ry} a discrete version, namely, they characterized the existence of a $(0,1)$-matrix $A$, with predetermined $r(A)$, the sums of its rows,  and $c(A)$, the sums of its columns  (which corresponds to fixing  2  partitions of   a given $n\in\mathbb N$). For example, if $n=5=3+2=2+2+1$, then the matrix
\begin{equation}\label{matrix}
A=\left(\begin{array}{ccc}1 & 1 & 1 \\1 & 1 & 0\end{array}\right)
\end{equation}
satisfies that   $r(A)=\{3,2\}$ and for the columns we obtain $c(A)=\{2,2,1\}$, as desired. Similarly, it is easy to see that for the partitions $n=5=4+1=2+2+1$ there is not such a matrix. The aforementioned characterization is given as follows:

\begin{theorem}[Gale--Ryser, \cite{ga:ga,ry:ry}]\label{gary}
Let $p=\{p_1,\dots,p_j\}$ and $q=\{q_1,\dots,q_k\}$ be two nonincreasing partitions of a positive integer (i.e., $p,q\subset\mathbb N$, $p_1\ge\cdots\ge p_j$,  $q_1\ge\cdots\ge q_k$ and $p_1+\cdots+p_j=q_1+\cdots+q_k$). Then, there exists a $(0,1)$-matrix $A\in\mathcal M_{j\times k}$ such that $r(A)=p$ and $c(A)=q$ if and only if for all positive integers $m$,
$$
\sum_{i=1}^mq_i\le \sum_{i=1}^m\widehat{p_i},
$$
where $\widehat{p_i}=\operatorname{card}\{1\le l\le j:p_l\ge i\}$, if $1\le i\le p_1$, $\widehat{p_i}=0$, if $i>p_1$ and $q_i=0$, if $i>k$.
\end{theorem}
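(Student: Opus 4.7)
The plan is to prove the two directions separately: necessity via a direct double-counting argument, and sufficiency by constructing an explicit initial matrix and deforming it through $2\times 2$ interchanges.

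For necessity, the crucial identity is
\[
\sum_{i=1}^m \widehat{p_i} \;=\; \sum_{l=1}^j \min(p_l, m),
\]
obtained by swapping the order of summation in the definition of $\widehat{p_i}$. The right-hand side is the maximal number of $1$s that any $(0,1)$-matrix with row sums $p$ can place in a fixed set of $m$ columns, since row $l$ can contribute at most $\min(p_l, m)$ entries to any such selection. Because $q$ is nonincreasing, the largest total of $1$s over any $m$ columns is precisely $\sum_{i=1}^m q_i$, realized by the first $m$ columns, so this quantity is bounded by $\sum_{i=1}^m \widehat{p_i}$.

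For sufficiency, I would first exhibit the \emph{left-packed} matrix $A_0 \in \mathcal{M}_{j\times k}$ defined by $A_0(l,i)=1$ if and only if $i\le p_l$. By construction $r(A_0)=p$ and $c(A_0)=\widehat{p}$, and the hypothesis, combined with the equality $\sum_i \widehat{p_i} = \sum_l p_l = \sum_i q_i$, implies that $q$ is majorized by $\widehat{p}$. The task is now to deform $A_0$, step by step, into a matrix with the prescribed column sums $q$ while preserving the row sums.

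The deformation uses the classical interchange operation: if some row $l$ has $A(l,i)=1$ and $A(l,j)=0$, flipping both entries preserves row sums but shifts the column-sum vector by $-1$ at $i$ and $+1$ at $j$. As long as the current column-sum sequence $c$ (rearranged to be nonincreasing via a harmless permutation of columns) differs from $q$, there is a smallest index $i$ with $c_i>q_i$ and a later index $j$ with $c_j<q_j$; in particular $c_i>c_j$, and a short counting argument shows that the rows with pattern $(1,0)$ in columns $(i,j)$ outnumber those with pattern $(0,1)$ by exactly $c_i-c_j>0$, producing a row on which to perform the swap. The monovariant $\sum_i (c_i-q_i)_+$ drops by one at each such step, so the procedure terminates after finitely many moves at a matrix with column sums equal to $q$. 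The main subtlety here is verifying that a legitimate swap is available at every intermediate stage and that termination occurs precisely at $q$ rather than at some other vector dominated by $\widehat{p}$; this is the content of the classical equivalence between the majorization order and reachability by elementary Robin Hood (T-)transfers.
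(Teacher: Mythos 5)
Your proof takes a genuinely different route from the paper's. The paper never proves Theorem~\ref{gary} directly: it is stated with a citation to \cite{ga:ga,ry:ry}, and the closing remark then observes that it is \emph{recovered} as a corollary of the general Theorem~\ref{T:GR} by taking counting measures on finite sets. Theorem~\ref{T:GR} in turn is proved via a continuous ``swappable dyadic squares'' construction on $[0,1]^2$, a limiting argument (lower limits of sets, convergence in measure, the Lorentz--Shimogaki inequality), and Ryff's measure-preserving transformations. Your argument is the direct, finite combinatorial proof --- essentially the one due to Krause \cite{Krau}, which the paper itself cites as the inspiration for its continuous swapping machinery. Both share the central idea of moving mass by local interchanges while monitoring a Hardy--Littlewood--P\'olya-type invariant; yours stays in the discrete matrix setting and is much more elementary, whereas the paper's buys full generality (arbitrary resonant $\sigma$-finite spaces) at the cost of serious measure-theoretic overhead.

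Two points in the sufficiency step need tightening. First, for the left-packed matrix $A_0$ to fit inside $\mathcal M_{j\times k}$ you need $p_1\le k$; this does follow from the hypothesis at $m=k$, which forces $\sum_{i=1}^{k}\widehat{p_i}=\sum_i q_i$ and hence $\widehat{p_i}=0$ for $i>k$, but it is worth saying. Second, and more substantively, the monovariant $\sum_i(c_i-q_i)_+$ only guarantees that the process terminates, not that it terminates at $c=q$. For that you need to know that $q$ is still majorized by the current (resorted) $c$ at every stage, so that a pair $i<j$ with $c_i>q_i$ and $c_j<q_j$ continues to exist; this invariant is \emph{not} preserved for an arbitrary choice of the ``later index $j$.'' It is preserved if $j$ is taken to be the \emph{smallest} index $>i$ with $c_j<q_j$: then for $i\le m<j$ one has $c_l\ge q_l$ for $i\le l\le m$, so $\sum_{l\le m}c_l\ge\sum_{l\le m}q_l+1$ and the transferred partial sums still dominate those of $q$; the ranges $m<i$ and $m\ge j$ are unaffected. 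You do flag this subtlety and defer to the classical T-transfer theory, which is a fair thing to do, but a fully self-contained write-up should pin down this choice of $j$.
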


As we can readily see, these conditions are in exact analogy with those in Theorem~\ref{planeset}. Influenced by this matricial case, we find a new approach (using the geometrical definition of horizontal swappable squares), allowing us to extend  and unify both Theorems~\ref{planeset} and \ref{gary} by considering products of general resonant measure spaces (see Theorem~\ref{T:GR}):

\begin{problem}\label{conj}
Let $(X,\mu)$ and $(Y,\nu)$ be two $\sigma$-finite measure spaces and let $f:X\rightarrow \mathbb R^+$ and $g:Y\rightarrow \mathbb R^+$ be measurable functions. Does there exist a measurable subset $E\subset X\times Y$ such that $f$ and $g$ are the corresponding cross sections of $E$:
\begin{equation}\label{setfg}
f(x)=\int_{Y}\chi_E(x,y)\,{\rm d}\nu(y)\quad\text{and}\quad g(y)=\int_{X}\chi_E(x,y)\,{\rm d}\mu(x),
\end{equation}
$\mu$-a.e. $x\in X$ and $\nu$-a.e. $y\in Y$?
\end{problem}

It is clear that a necessary condition for \eqref{setfg} to hold is that $\|f\|_{L^1(\dmu)}=\|g\|_{L^1(\dnu)}$. However, as the previous matricial example shows for   counting finite measures, this equality is not enough.

The following result gives the key  estimate to solve Problem~\ref{conj} (which will be considered in Section~\ref{charact}).  For the rest of this section we will assume the following \textit{resonant} condition on  the  measure space $X$, which is equivalent to saying that $X$ is either   nonatomic or completely atomic, with all atoms having equal measure \cite[Theorem~II.2.7]{BS}.

\begin{proposition}\label{hlpineq}
Let $(X,\mu)$ and $(Y,\nu)$ be two $\sigma$-finite  measure spaces, assume $X$ to be   resonant and  let $E\subset X\times Y$ be a measurable subset. Let $f$ and $g$ be the cross sections of $E$ on $X$ and $Y$, respectively, as in \eqref{setfg}. Then, for every $t>0$  we have that
\begin{equation}\label{HLP}
\int_0^tf^*(s)\,\ds\le \int_0^t\lambda_g(s)\,\ds.
\end{equation}
\end{proposition}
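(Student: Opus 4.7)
The plan is to use two dual Fubini computations, connected by the resonance characterization of the level integral of $f^*$. The key identity on the right-hand side is
$$
\int_0^t \lambda_g(s)\,\ds = \int_0^t \nu(\{y\in Y:g(y)>s\})\,\ds = \int_Y \min(g(y),t)\,\dnu(y),
$$
obtained by Fubini applied to $\chi_{\{(y,s):s<g(y),\,0<s<t\}}$. So the target inequality reduces to
$$
\int_0^t f^*(s)\,\ds \le \int_Y \min(g(y),t)\,\dnu(y).
$$

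On the left-hand side, since $(X,\mu)$ is resonant, the standard identity (see \cite[Theorem~II.2.7]{BS}) gives
$$
\int_0^t f^*(s)\,\ds = \sup\left\{\int_F f\,\dmu : F\subset X \text{ measurable}, \ \mu(F)\le t\right\}.
$$
Thus it suffices to show that for every measurable $F\subset X$ with $\mu(F)\le t$,
$$
\int_F f\,\dmu \le \int_Y \min(g(y),t)\,\dnu(y).
$$

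To establish this, I would write, via Fubini and the definition \eqref{setfg},
$$
\int_F f\,\dmu = \int_F \int_Y \chi_E(x,y)\,\dnu(y)\,\dmu(x) = \int_Y \mu\bigl(F\cap E_y\bigr)\,\dnu(y),
$$
where $E_y=\{x\in X:(x,y)\in E\}$ is the $y$-section of $E$, so that $\mu(E_y)=g(y)$. The elementary bound $\mu(F\cap E_y)\le \min(\mu(F),\mu(E_y))\le \min(t,g(y))$ then gives the claim after integrating over $Y$.

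Aside from checking that Fubini applies (which follows from $\sigma$-finiteness of both spaces and nonnegativity of $\chi_E$), there is essentially no obstacle: the argument hinges on the resonance identity, the pointwise estimate $\mu(F\cap E_y)\le \min(t,g(y))$, and the Fubini reformulation of $\int_0^t \lambda_g$. The only mild subtlety is to ensure the supremum is attained over sets of measure at most (rather than exactly) $t$, which is harmless since one can truncate or enlarge $F$ without decreasing the integral on the left and without changing the inequality on the right.
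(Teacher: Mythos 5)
Your argument matches the paper's in its essential ingredients: Fubini's theorem, the pointwise estimate $\mu(F\cap E_y)\le\min(t,g(y))$ (the paper phrases this via \cite[Lemma~II.2.1]{BS}), and the resonance characterization of $\int_0^t f^*$. However, there is a genuine gap in the last step, and you actually flag the right spot but dismiss it incorrectly. The identity
$$
\int_0^t f^*(s)\,\ds \;=\; \sup\left\{\int_F f\,\dmu \,:\, \mu(F)\le t\right\}
$$
does \emph{not} hold for every $t>0$ when $X$ is completely atomic. Take $\mu$ to be the counting measure on $\N$ and $t=3/2$. The left side is $f^*(0)+\tfrac12 f^*(1)$, while every $F$ with $\mu(F)\le 3/2$ has cardinality at most one, so the right side is only $f^*(0)$. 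You cannot ``enlarge $F$'' by half an atom, so the quantity you control, $\sup_{\mu(F)\le t}\int_F f$, is strictly smaller than $\int_0^t f^*$, and your chain of estimates does not reach the claimed inequality. This is not a mild subtlety but precisely the obstruction that forces the case distinction in the paper's proof.

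The paper's argument is the same as yours for $t$ in the range of $\mu$ (where the resonance characterization via \cite[Proposition~II.3.3]{BS} is exact), notes that the inequality is trivial for $t\ge\mu(X)$, and then closes the gap for intermediate $t$ in the atomic case by a concavity argument: both $t\mapsto\int_0^t f^*(s)\,\ds$ and $t\mapsto\int_0^t\lambda_g(s)\,\ds$ are concave, the former piecewise linear with nodes at the achievable measures, so validity at the nodes propagates to all $t>0$. Adding that concavity step would make your proof complete.
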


\begin{proof}
We first assume that $t>0$ is in the range of $\mu$, and choose any set $A_t\subset X$, with $\mu(A_t)=t$. Then, setting
$$
E_x=\{y\in Y:(x,y)\in E\}\quad\text{and}\quad E^y=\{x\in X: (x,y)\in E\},
$$
and using \cite[Lemma~II.2.1]{BS}, we have that

\begin{align*}
\int_{A_t}f(x)\,{\rm d}\mu(x)&=\int_{A_t}\int_{Y}\chi_E(x,y)\,{\rm d}\nu(y)\,{\rm d}\mu(x)=\int_{Y}\int_{A_t}\chi_E(x,y)\,{\rm d}\mu(x)\,{\rm d}\nu(y)\\
&=\int_{Y}\int_{A_t}\chi_{E^y}(x)\,{\rm d}\mu(x)\,{\rm d}\nu(y)\le\int_{Y}\int_{0}^{t}\chi_{(0,\mu(E^y))}(s)\,{\rm d}s\,{\rm d}\nu(y)\\
&=\int_{Y}\int_{0}^{t}\chi_{(0,g(y))}(s)\,{\rm d}s\,{\rm d}\nu(y)=\int_{0}^{t}\int_{Y}\chi_{(0,g(y))}(s)\,{\rm d}\nu(y)\,\ds\\
&=\int_{0}^{t}\int_{Y}\chi_{\{y\in Y:g(y)>s\}}\,{\rm d}\nu(y)\,ds=\int_0^t\lambda_g(s)\,\ds.
\end{align*}

Finally, since $X$ is resonant and using  \cite[Proposition~II.3.3]{BS}

$$
\int_0^tf^*(s)\,\ds=\sup_{\{A_t\subset X:\mu(A_t)=t\}}\int_{A_t}f(x)\,{\rm d}\mu(x)\le \int_0^t\lambda_g(s)\,\ds.
$$
Once we have proved \eqref{HLP} for $t>0$ in the range of $\mu$, we observe that the inequality trivially holds for any $t\ge\mu(X)$, since then both sides of \eqref{HLP} are equal to  the measure of $E$. Now, if $0<t<\mu(X)$  and $X$ is nonatomic, then we can find a measurable subset $A\subset X$ such that $\mu(A)=t$ (see \cite{sp:sp}) and we are done. To finish,  if $0<t<\mu(X)$ and $X$ is a discrete (totally atomic) measure space, then $\int_0^tf^*(s)\,ds$  is a piecewise linear concave function and  $\int_0^t\lambda_g(s)\,ds$ is a  concave function  greater than the previous integral at the nodes (the $\mu$-measure of  a finite collection of atoms). Hence, by the concavity property, the inequality is also true for the intermediate values of $t>0$.
\end{proof}

\begin{remarks}\label{remarks}

A simple remark, when we work with arbitrary general measures, is that \eqref{setfg} implies that the cross sections must take values in the image of the measure. For example, if $X=\{1,2\}$ and $Y=\{1\}$, both with the cardinality measures, then $f:X\rightarrow\R^+$, $f(1)=f(2)=1/2$ and $g:Y\rightarrow\R^+$, $g(1)=1$ satisfy \eqref{HLP}, but $f$ is not the cross section of any set $E\subset X\times Y$, since the cardinality measure only takes nonnegative integer values.

\medskip

We observe that \eqref{HLP} is not a homogeneous inequality. For example, if  $f\equiv g\equiv 1$ on $[0,1]$, corresponding to the case $E=[0,1]\times[0,1]$, then \eqref{HLP} trivially holds, but it is false for $2f$ and $2g$.
\medskip
\end{remarks}

Condition  \eqref{HLP} is not, a priori,  symmetric on $f$ and $g$. However, we are going to prove, in Proposition~\ref{SstarSinfty},  that we can reverse the role of $f$ and $g$ in \eqref{HLP}, as long as they have the same $L^1$-norms, which, as we already know, is a necessary condition to solve Problem~\ref{conj} (to simplify the proof, continuity of the nonincreasing rearrangements will be also assumed).  We start by recalling some well-known equalities:

\begin{lemma}
\label{threeinone}
Let $(X, \mu)$ be a $\sigma$-finite measure spaces and let $f:X\rightarrow \R^+$ be  a measurable function, with $\|f\|_{L^1(\dmu)}<+\infty$. Then, for every $t>0$,
\begin{equation}
\label{f1}
  \int_t^{\infty} \lambda_f(s) \, \ds+t\lambda_f(t)=\int_0^{\lambda_f(t)} f^*(s) \, \ds
 \end{equation}
and
\begin{equation}
\label{f2}
\int_t^{\infty} f^*(s)\,\ds+t f^*(t)=\int_0^{f^*(t)} \lambda_f(s) \, \ds. 
\end{equation}
\end{lemma}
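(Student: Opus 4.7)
The plan is to derive both identities by Fubini's theorem (layer-cake representation), reducing each integral to one written purely in terms of $\lambda_f$. Geometrically, the hypograph $H=\{(s,u)\in\R^2_+:u<f^*(s)\}=\{(s,u):s<\lambda_f(u)\}$ has two equivalent descriptions, and (f1), (f2) compute the area of an appropriate slab of $H$ in two different ways.

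I would prove (f2) first. Writing $f^*(s)=\int_0^\infty\chi_{\{f^*>u\}}(s)\,du$ and applying Fubini,
\[
\int_0^t f^*(s)\,\ds=\int_0^\infty\bigl|(0,t)\cap\{f^*>u\}\bigr|\,du=\int_0^\infty\min(t,\lambda_f(u))\,du,
\]
where I use the basic identity $\{s>0:f^*(s)>u\}=(0,\lambda_f(u))$ (a direct consequence of the definition of $f^*$ as the right-continuous generalized inverse of $\lambda_f$, and equivalent to the equimeasurability $\lambda_{f^*}=\lambda_f$ on $(0,\infty)$). Splitting the last integral at $u=f^*(t)$, using $\lambda_f(u)>t$ for $u<f^*(t)$ and $\lambda_f(u)\le t$ for $u\ge f^*(t)$, gives
\[
\int_0^t f^*(s)\,\ds=tf^*(t)+\int_{f^*(t)}^\infty\lambda_f(u)\,du.
\]
Subtracting this from $\int_0^\infty f^*(s)\,\ds=\int_0^\infty\lambda_f(u)\,du=\|f\|_{L^1(\dmu)}$ immediately yields (f2).

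For (f1), the quickest route is to apply the just-proved (f2) to $\lambda_f$ itself, regarded as a measurable function on $[0,\infty)$ with Lebesgue measure. Since $\lambda_f$ is already nonincreasing and right-continuous, $(\lambda_f)^*=\lambda_f$; and the same inverse relation as above gives $\lambda_{\lambda_f}(u)=f^*(u)$. Substituting these into (f2) with $f$ replaced by $\lambda_f$ delivers (f1) at once. Alternatively, a parallel Fubini computation applied directly to the right-hand side of (f1) shows both sides equal $t\lambda_f(t)+\int_t^\infty\lambda_f(s)\,\ds$.

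The main obstacle I expect is purely bookkeeping around plateaus and jumps of $f^*$ and $\lambda_f$: which inequalities should be strict, and whether certain exceptional values of $t$ require separate treatment. However, the right-continuous convention for $\lambda_f$ together with the infimum definition of $f^*$ makes the key relation $\{s:f^*(s)>u\}=(0,\lambda_f(u))$ hold exactly (up to at most one endpoint of Lebesgue measure zero), so the Fubini computations go through for every $t>0$ and no continuity assumption on $f$ is needed.
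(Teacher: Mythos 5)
Your proposal is correct. The paper states this lemma without proof (it is explicitly introduced as ``recalling some well-known equalities''), so there is no authorial argument to compare against; you therefore need the proof to stand on its own, and it does. The Fubini computation for \eqref{f2} is clean: $\int_0^t f^*=\int_0^\infty\min(t,\lambda_f(u))\,du$, and the split at $u=f^*(t)$ is justified exactly by the right-continuity of $\lambda_f$ (so that $\lambda_f(u)\le t$ for $u\ge f^*(t)$) together with the definition of the infimum (so that $\lambda_f(u)>t$ for $u<f^*(t)$); subtracting from the common total mass $\int_0^\infty f^*=\int_0^\infty\lambda_f=\|f\|_{L^1}$ gives \eqref{f2}. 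The deduction of \eqref{f1} by substituting $\lambda_f$ for $f$ is valid and elegant: $\lambda_f$ is nonincreasing, right-continuous and integrable on $(0,\infty)$ (integrability from $\|f\|_{L^1}<\infty$, finiteness of $\lambda_f(s)$ for $s>0$ from Chebyshev), so $(\lambda_f)^*=\lambda_f$, and the symmetric relation $\lambda_{\lambda_f}=f^*$ holds, which turns \eqref{f2} for $\lambda_f$ into \eqref{f1} for $f$. One small remark: the identity $\{s>0:f^*(s)>u\}=(0,\lambda_f(u))$ in fact holds exactly (not just up to an endpoint), since right-continuity of $\lambda_f$ gives the equivalence $f^*(s)>u\iff\lambda_f(u)>s$; your hedge about a null endpoint is harmless but unnecessary.
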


\begin{proposition}
\label{SstarSinfty}
Let $(X, \mu)$ and $(Y,\nu)$ be two $\sigma$-finite measure spaces and suppose that $f:X\rightarrow \R^+$ and $g:Y\rightarrow \R^+$ are measurable functions such that $f^*$ and $g^*$ are continuous   and   $\|f\|_{L^1({\rm d}\mu)}=\|g\|_{L^1({\rm d}\nu)}<\infty$. If \eqref{HLP} holds, for  $t>0$, then for every $r>0$, 
\begin{equation}\label{goal}
\int_0^r g^*(s) \, \ds \leq \int_0^r \lambda_f(s) \, \ds.
\end{equation}
\end{proposition}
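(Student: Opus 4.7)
The plan is to convert \eqref{goal} into the equivalent ``dual'' tail inequality $\int_r^\infty\lambda_f(s)\,\ds\le\int_r^\infty g^*(s)\,\ds$ (equivalent by the common-$L^1$ hypothesis) and to prove this tail inequality via a Legendre-type argument built on Lemma~\ref{threeinone} and a single well-chosen application of \eqref{HLP}.

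Fix $r>0$. First I would rewrite both tails by invoking the identities of Lemma~\ref{threeinone} at $t=r$. Identity \eqref{f1} applied to $f$ gives
\[
\int_r^\infty\lambda_f(s)\,\ds=\int_0^{\lambda_f(r)}f^*(s)\,\ds-r\lambda_f(r),
\]
while \eqref{f2} applied to $g$ gives
\[
\int_r^\infty g^*(s)\,\ds=\int_0^{g^*(r)}\lambda_g(s)\,\ds-rg^*(r).
\]
Both expressions have the form $\int_0^t\psi(s)\,\ds-rt$ with $\psi$ nonincreasing and $t$ a level-$r$ inverse of $\psi$.

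Next I would exploit the elementary supremum principle: for any nonincreasing $\psi\colon(0,\infty)\to[0,\infty)$, the map $t\mapsto\int_0^t[\psi(s)-r]\,\ds$ is nondecreasing on $\{\psi\ge r\}$ and nonincreasing on $\{\psi\le r\}$, hence attains its maximum at any $t_0$ with $\psi(t_0)=r$. Applying this to $\psi=\lambda_g$ with $t_0=g^*(r)$ (the continuity of $g^*$ is exactly what produces $\lambda_g(g^*(r))=r$) and combining with \eqref{HLP} at the single value $t=\lambda_f(r)$, I obtain the chain
\[
\int_0^{\lambda_f(r)}f^*(s)\,\ds-r\lambda_f(r)\le\int_0^{\lambda_f(r)}\lambda_g(s)\,\ds-r\lambda_f(r)\le\int_0^{g^*(r)}\lambda_g(s)\,\ds-rg^*(r).
\]
Reading this chain through the two identities above yields the tail inequality $\int_r^\infty\lambda_f\,\ds\le\int_r^\infty g^*\,\ds$, and subtracting both tails from the common value $\|f\|_{L^1(\dmu)}=\|g\|_{L^1(\dnu)}$ delivers \eqref{goal}.

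The point requiring the most care will be the role of the continuity of $f^*$ and $g^*$: it is what guarantees that $\lambda_f(r)$ and $g^*(r)$ act as genuine inverses of $r$ under $f^*$ and $\lambda_g$ respectively, so that \eqref{f1}--\eqref{f2} capture the tails cleanly and the supremum-at-crossing principle applies without boundary correction terms. Without continuity one should expect extra contributions coming from jumps of $\lambda_f$ or $\lambda_g$, and this is presumably the reason the authors explicitly impose continuity to simplify the argument; everything else is bookkeeping with the lemma.
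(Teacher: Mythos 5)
Your proof is correct, and it takes a genuinely different, cleaner route than the paper's. The paper applies Lemma~\ref{threeinone} in the opposite pairing---identity~\eqref{f1} to $g$ and \eqref{f2} to $f$---to convert the tail inequality \eqref{atinfty} into the estimate labelled \eqref{equiv}, then parametrizes $r=f^*(t)$ via continuity of $f^*$, splits into the cases $f^*(t)\le\lambda_g(t)$ and $f^*(t)>\lambda_g(t)$, and closes each branch with a monotonicity estimate that leans on $g^*(\lambda_{g^*}(t))=t$ (hence on continuity of $g^*$). You instead apply \eqref{f1} to $f$ and \eqref{f2} to $g$ and finish in one stroke with the Legendre-type observation that $\Phi(t):=\int_0^t(\lambda_g(s)-r)\ds$ attains its maximum at $t=g^*(r)$; this subsumes the paper's two cases into the single inequality $\Phi(\lambda_f(r))\le\Phi(g^*(r))$ and needs only one application of \eqref{HLP}, at $t=\lambda_f(r)$. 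The only inaccuracy is the parenthetical claim that continuity of $g^*$ forces $\lambda_g(g^*(r))=r$: this can fail when $g^*$ has a plateau at a level strictly above $r$ (then $\lambda_g$ jumps across $r$ at $s=g^*(r)$ and $\lambda_g(g^*(r))<r$). Fortunately the maximization does not require that equality: directly from $g^*(r)=\inf\{s>0:\lambda_g(s)\le r\}$ together with right-continuity and monotonicity of $\lambda_g$, one has $\lambda_g>r$ on $[0,g^*(r))$ and $\lambda_g\le r$ on $[g^*(r),\infty)$, so $\Phi$ is nondecreasing up to $g^*(r)$ and nonincreasing beyond it. With this small repair your argument uses neither continuity hypothesis, which the paper imposes explicitly to streamline its own case analysis; your route therefore both shortens the proof and dispenses with an unnecessary assumption.
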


\begin{proof}
Let us assume that \eqref{HLP} holds, for   $t>0$.
Since
$$
\|f\|_{L^1({\rm d}\mu)}=\int_0^\infty f^*(s)\,\ds=\int_0^\infty\lambda_g(s)\,\ds=\|g\|_{L^1({\rm d}\nu)},
$$
it is easy to see that \eqref{HLP} is equivalent to the inequality
\begin{equation}
\label{atinfty}
\int_t^{\infty} \lambda_g(s) \, \ds \leq \int_t^{\infty} f^*(s) \, \ds.
\end{equation}
Let us now prove \eqref{goal}. Using the hypothesis on $f$ and $g$, and Lemma~\ref{threeinone}, condition (\ref{atinfty}) is equivalent to the following inequality:
\begin{equation}
\label{equiv}
\int_0^{\lambda_g(t)} g^*(s) \, \ds -t\lambda_g(t) \leq \int_0^{f^*(t)} \lambda_f(s) \, \ds-tf^*(t).
\end{equation}
To prove (\ref{goal}), we observe that if $r\ge\|f\|_\infty$, the result is trivial since the right-hand side is equal to $\|f\|_1$. Now, if $0<r<\|f\|_\infty=f^*(0)$, then by the continuity of $f^*$ and the fact that $\lim_{s\to\infty}f^*(s)=0$, there exists a $t>0$ such that $r=f^*(t)$.
Let us distinguish the following two possibilities:
\medskip

If $f^*(t)\leq \lambda_g(t)$, then $\lambda_g(t)>0$ and hence $0<t<\|g\|_\infty$. Now,  since $g^*$ is nonincreasing, then  (\ref{equiv}) implies
$$
\begin{aligned}
\int_0^{f^*(t)} g^*(s) \, \ds-\int_0^{f^*(t)} \lambda_f(s) \, \ds& \leq t(\lambda_g(t)-f^*(t))-\int_{f^*(t)}^{\lambda_g(t)} g^*(s) \, \ds\\
&=\int_{f^*(t)}^{\lambda_g(t)}(t-g^*(s)) \, \ds\leq \int_{f^*(t)}^{\lambda_g(t)}(t-g^*(\lambda_g(t))) \, \ds\\
& =  \int_{f^*(t)}^{\lambda_g(t)}(t-g^*(\lambda_{g^*}(t))) \, \ds=0,
\end{aligned}
$$
which implies (\ref{goal}). The last equality follows because by the hypotheses on $g^*$, we have  that for every $0<s<t$, $|\{y>0:s<g^*(y)\le t\}|>0$, which is equivalent to the equality $g^*(\lambda_{g^*}(t))=t$. In fact, $g^*(\lambda_{g^*}(t))\le t$ is always true \cite[Proposition~II.1.7]{BS} and $g^*(\lambda_{g^*}(t))\ge t$ means that 
$$
t\le \inf\{s>0:\lambda_{g^*}(s)\le\lambda_{g^*}(t)\};
$$
that is, if $s<t$, then $\lambda_{g^*}(s)>\lambda_{g^*}(t)$, which is the hypothesis. 
\medskip

Similarly, if $f^*(t) > \lambda_g(t)$, (\ref{equiv}) and the monotonicity of $g^*$ imply
$$
\begin{aligned}
\int_0^{f^*(t)} g^*(s) \, \ds-\int_0^{f^*(t)} \lambda_f(s) \, \ds& \leq t(\lambda_g(t)-f^*(t))+\int_{\lambda_g(t)}^{f^*(t)} g^*(s) \, \ds\\
& \leq (f^*(t)-\lambda_g(t))(g^*(\lambda_g(t))-t) \le 0,
\end{aligned}
$$
as desired.
\end{proof}

This last result will also be clear once we prove, in Theorem~\ref{T:GR}, that \eqref{HLP} is equivalent to the existence of a set $E\subset X\times Y$ with $f$ and $g$ as its cross sections. Changing $E$ by its transpose set $\widetilde{E}=\{(y,x):Y\times X, \text{ such that } (x,y)\in E\}$ and applying Proposition~\ref{hlpineq} to $\widetilde{E}$, we finally obtain \eqref{goal}.

\medskip

To finish the section, we are to going to work the details, for  a couple of concrete and elementary  cases, showing that the existence (or, rather, the construction) of the set $E$ is not in general straightforward. Moreover, from these examples we will see that the solution is not, in general, uniquely determined.

\begin{examples}
\

\begin{enumerate}[leftmargin=*]
\item[(i)]
Let $X=Y=[0,1]$, with the Lebesgue measure, and consider $f(x)=g(x)=(1-x)/2$. Then, $f^*(t)=f(t)\chi_{[0,1]}(t)$ and $\lambda_g(t)=(1-2t)\chi_{[0,1/2]}(t)$. Thus,
$$
\int_0^t f(s)\,\ds=\bigg(\frac{t}2-\frac{t^2}4\bigg)\chi_{[0,1}](t)+\frac14\chi_{(1,\infty)}(t)
$$
and
$$
\int_0^t\lambda_g(s)\,\ds=(t-t^2)\chi_{[0,1/2]}(t)+\frac14\chi_{(1/2,\infty)}(t),
$$

\noindent
and \eqref{HLP} holds. For this case, it is easy  to check that any of the sets on Figure~\ref{set1} give a positive solution to Problem~\ref{conj}.

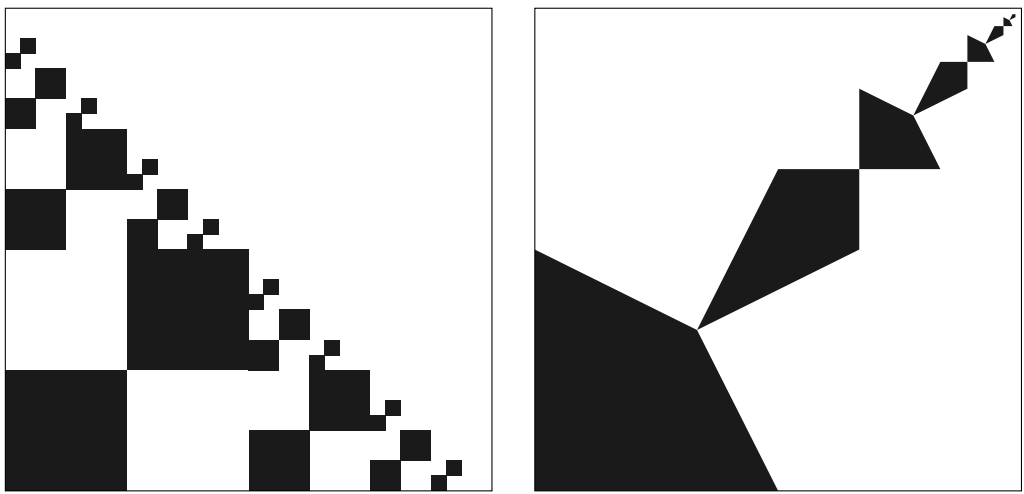
\begin{figure}[htp]
\begin{center}
\begin{tikzpicture}	[scale=0.8]	
	\fill[black!90] (0,0) rectangle (2,2);
	\fill[black!90] (2,2) rectangle (4,4);
	\fill[black!90] (0,4) rectangle (1,5);
	\fill[black!90] (1,5) rectangle (2,6);
	\fill[black!90] (4,0) rectangle (5,1);
	\fill[black!90] (5,1) rectangle (6,2);
	\fill[black!90] (0,6) rectangle (0.5,6.5);
	\fill[black!90] (0.5,6.5) rectangle (1,7);
	\fill[black!90] (6,0) rectangle (6.5,0.5);
	\fill[black!90] (6.5,0.5) rectangle (7,1);
	\fill[black!90] (2,4) rectangle (2.5,4.5);
	\fill[black!90] (2.5,4.5) rectangle (3,5);
	\fill[black!90] (4,2) rectangle (4.5,2.5);
	\fill[black!90] (4.5,2.5) rectangle (5,3);
	\fill[black!90] (0,7) rectangle (0.25,7.25);
	\fill[black!90] (0.25,7.25) rectangle (0.5,7.5);
	\fill[black!90] (7,0) rectangle (7.25,0.25);
	\fill[black!90] (7.25,0.25) rectangle (7.5,0.5);
	\fill[black!90] (1,6) rectangle (1.25,6.25);
	\fill[black!90] (1.25,6.25) rectangle (1.5,6.5);
	\fill[black!90] (6,1) rectangle (6.25,1.25);
	\fill[black!90] (6.25,1.25) rectangle (6.5,1.5);
	\fill[black!90] (2,5) rectangle (2.25,5.25);
	\fill[black!90] (2.25,5.25) rectangle (2.5,5.5);
	\fill[black!90] (5,2) rectangle (5.25,2.25);
	\fill[black!90] (5.25,2.25) rectangle (5.5,2.5);
	\fill[black!90] (3,4) rectangle (3.25,4.25);
	\fill[black!90] (3.25,4.25) rectangle (3.5,4.5);
	\fill[black!90] (4,3) rectangle (4.25,3.25);
	\fill[black!90] (4.25,3.25) rectangle (4.5,3.5);
	\draw[black, thin] (0,0) rectangle (8,8);	
\end{tikzpicture}	\quad\begin{tikzpicture}	[scale=0.8]	
	\fill[black!90] (0,0) -- (0,4) -- (8/3,8/3) -- (4,0);
	\fill[black!90] (8/3,8/3) -- (4,16/3) -- (16/3,16/3) -- (16/3,4);
	\fill[black!90] (16/3,16/3) -- (16/3,20/3) -- (56/9,56/9) -- (20/3,16/3);
	\fill[black!90] (56/9,56/9) -- (60/9,64/9) -- (64/9,64/9) -- (64/9,60/9);
	\fill[black!90] (64/9,64/9) -- (64/9,68/9) -- (200/27,200/27) -- (68/9,64/9);
	\fill[black!90] (200/27,200/27) -- (204/27,208/27) -- (208/27,208/27) -- (208/27,204/27);
	\fill[black!90] (208/27,208/27) -- (208/27,212/27) -- (632/81,632/81) -- (212/27,208/27);
	\fill[black!90] (632/81,632/81) -- (636/81,640/81) -- (640/81,640/81) -- (640/81,636/81);
	\draw[black, thin] (0,0) rectangle (8,8);	
\end{tikzpicture}
\caption{ Two different  approximations of a set $E\subset[0,1]\times[0,1]$,  with cross sections equal to $f(x)=g(x)=(1-x)/2$.}
\label{set1}
\end{center}
\end{figure}

\item[(ii)] For $0<a<1$, let $f(x)=g(x)=a\chi_{[0,1]}(x)$. Then \eqref{HLP} holds and two possible sets for which \eqref{setfg} is satisfied are shown in Figure~\ref{set2}.

\begin{figure}[tbp]
\begin{center}
\begin{tikzpicture}[scale=0.8]							
		\fill[black!90] (0,0) -- (0,8/3) -- (8/3,0);
		\fill[black!90] (8,0) -- (0,8) -- (8/3,8) -- (8,8/3);
		\draw[black, thin] (0,0) rectangle (8,8);	
	\end{tikzpicture}\quad\begin{tikzpicture}	[scale=0.8]	
	\fill[black!90] (0,0) -- (0,8/6) -- (8/6,0);
	\fill[black!90] (8,0) -- (0,8) -- (8/6,8) -- (8,8/6);
	\fill[black!90] (4,0) -- (0,4) -- (0,16/3) -- (16/3,0);
	\fill[black!90] (8,4) -- (4,8) -- (16/3,8) -- (8,16/3);
	\draw[black, thin] (0,0) rectangle (8,8);	
\end{tikzpicture}
\caption{Two different sets $E\subset[0,1]\times[0,1]$,  with cross sections equal to $f(x)=g(x)=a\chi_{[0,1]}(x)$. In this example, $a=1/3$.}
\label{set2}
\end{center}
\end{figure}
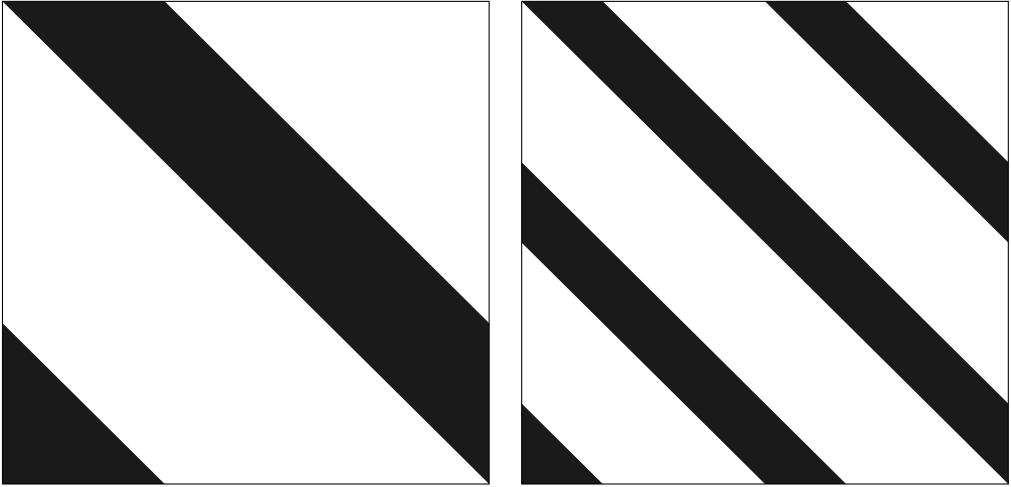

\end{enumerate}

\end{examples}
\bigbreak
\section{Existence of the set $E$ with a priori cross sections}\label{charact}

Our main result in this section is Theorem~\ref{T:GR}, where we show that the necessary condition \eqref{HLP} of Proposition~\ref{hlpineq} is actually sufficient to find a set $E$, in the product space, with given cross sections. The main geometric tool used for such construction is the notion of swappable squares in Definition~\ref{D:sw-sq}, which allows us to horizontally translate the mass of the hypograph of the function $g$ in such a way that in the limit, after suitable iterations for different grids of  dyadic squares, we get precisely a set $E$ with the vertical cross section equal to $f$.  In the discrete  case, this idea lies behind the proof of the Gale-Ryser Theorem~\ref{gary} given in \cite{Krau}.
\medskip

To this end, we will start by proving some interesting properties of this swapping argument, as well as some measure theoretical estimates of the (lower) limit set obtained.

\begin{definition}[Dyadic squares]\label{D:dya}
    For $n\in\N$ and $i,j\in\{1,\ldots,2^n\}$ define
	    \begin{align*}
		    \xnj:= [2^{-n}(j-1),2^{-n}j),\qquad
		    \yni := [2^{-n}(i-1),2^{-n}i)
		\end{align*}
	and
      $$
        Q_{ij}^n := \xnj\times \yni.
      $$
    We call $\qij$ the  \emph{dyadic squares of $n$-th generation} with indices $i,j$.
\end{definition}

\begin{definition}[Shifted set]\label{D:shift}
  Let $Q\subset\R^2$ and $(x_0,y_0)\in\R^2$. Then we define
    $$
      Q + (x_0,y_0) := \{(x,y)\in\R^2: (x-x_0,y-y_0)\in Q\}.
    $$
\end{definition}

\begin{definition}[Cross sections of a~set]\label{D:rearr}
    Let $A\subset [0,1]^2$ be a~measurable set. Define the \emph{vertical cross section} and the \emph{horizontal cross section}, respectively, of the set $A$ as 
     $$   v_A(x)  := \int_0^1 \chi_A(x,z)\dz,\qquad  x\in[0,1],$$
     and
     $$
        h_A(y)  := \int_0^1 \chi_A(z,y)\dz,\qquad  y\in[0,1].$$
\end{definition}

\begin{definition}[Horizontal swapping]\label{D:swap}
  Let $A\subset[0,1]^2$ be a~set. Let $n\in\N$ and $i,j,k\in\{1,\ldots,2^n\}$, $j\ne k$. Then we define the set
    $$
      \sigma^n_{ijk}(A) := (A \setminus (\qij \cup \qik)) \cup ((\qij\cap A) + (2^{-n}(k-j),0)) \cup ((\qik\cap A) - (2^{-n}(k-j),0)).
    $$
  In other words, $\sigma^n_{ijk}(A)$ is the set $A$, whose subsets $\qij\cap A$ and $\qik\cap A$ have ``changed their places''.
\end{definition}

\begin{definition}[Swappable squares]\label{D:sw-sq}
  Let $A\subset[0,1]^2$ be a~measurable set and $f:[0,1]\to [0,1]$ be a~measurable function such that
    $$
      \int_0^t \f(s)\ds \le \int_0^t v_A^*(s) \ds, \quad \text{for all } t\in[0,1].
    $$
  Let $n\in\N$ and $i,j,k \in\{1,\ldots,2^n\}$, $j\ne k$. Then, given $f$ and $n\in\N$ as above,  we say that the dyadic squares $\qij$ and $\qik$ are \emph{swappable} (with respect to $f$, $A$ and $n$)  if, with
    $$
      A' := \sigma^n_{ijk} (A),
    $$
  the following conditions are satisfied:
    \begin{gather}
       v_A \ge f + 2^{-n} \quad \text{a.e.~in } \xnj, \label{E:s1}\\
       v_A \le f - 2^{-n} \quad \text{a.e.~in } \xnk, \label{E:s2}\\
       \int_0^t \f(s)\ds  \le \int_0^t v_{A'}^*(s) \ds \quad \text{for all } t\in[0,1],\label{E:s4}\\
       ((\qik\cap A) - (2^{-n}(k-j),0)) \subsetneq \qij\cap A\quad \text{and}\quad |A\syd A'|>0. \label{E:s5}
    \end{gather}
\end{definition}

\medskip

\begin{lemma}\label{propswap}
Let the function $f$, the indices $n,i,j,k$, the squares $ \qij$ and $ \qik$, and the sets $A$ and $A'$ be as in Definition~\ref{D:sw-sq}. Then,
\begin{enumerate}
\item[\rm(i)] $h_A(y)=h_{A'}(y)$, for every $y\in[0,1]$, and hence $|A|=|A'|$.

\medskip

\item[\rm(ii)]
$$ |A \setminus A'| = \int_{\xnj} (v_{A}(x) - v_{A'}(x)) \dx\ \text{ and }\  |A' \setminus A| = \int_{\xnk} (v_{A'}(x) - v_A(x)) \dx.$$

\item[\rm(iii)]\label{cond3lem}
$$\int_0^1 |f(x)-v_{A'}(x)| \dx =\int_0^1 |f(x)-v_{A}(x)| \dx - |A\syd A'|.$$

\item[\rm(iv)]\label{lemit4}
 If $l\in\{1,\dots,2^n\}$, then:
	\begin{align*}
		f \ge v_{A'} \ge v_{A} \text{ in }X^n_l & \ \text{ if } f \ge v_{A} \text{ in } X^n_l, \\
		f \le v_{A'} \le v_{A} \text{ in } X^n_l& \ \text{ if } f \le v_{A} \text{ in } X^n_l, \\
		v_{A'} = v_{A}   \text{ in } X^n_l & \ \text{ else.}
	\end{align*}

\end{enumerate}
\end{lemma}

\begin{proof}
(i) The result is clear if $y\in[0,1]\setminus Y_i^n$. Now, if $y\in Y_i^n$, then
\begin{align*}
h_{A'}(y)&=\int_0^1\chi_{A'}(x,y)\dx=\int_0^1\chi_{(A \setminus (\qij \cup \qik)) }(x,y)\dx\\
&\qquad+\int_0^1\chi_{((\qij\cap A) + (2^{-n}(k-j),0)) }(x,y)\dx+\int_0^1\chi_{ ((\qik\cap A) - (2^{-n}(k-j),0)) }(x,y)\dx\\
&=\int_0^1\chi_{(A \setminus (\qij \cup \qik)) }(x,y)\dx+\int_0^1\chi_{(\qij\cap A) )}(x,y)\dx+\int_0^1\chi_{ (\qik\cap A)  }(x,y)\dx\\
&=\int_0^1\chi_{A}(x,y)\dx=h_{A}(y).
\end{align*}

\noindent
(ii) We prove the first equality (the second one is completely analogous). Now, using \eqref{E:s5}, we have that:
 \begin{align*}
  |A \setminus A'| & = |(A \setminus A') \cap \qij|
     = \int_{\xnj} \int_{\yni} \chi_{A\setminus A'}(x,y) \dy \dx \\
    &
     = \int_{\xnj} \int_{\yni} (\chi_{A}(x,y) - \chi_{A'}(x,y) )\dy \dx  = \int_{\xnj}( v_{A\cap \qij}(x) - v_{A' \cap \qij} (x)) \dx \\
    &
     = \int_{\xnj} (v_{A}(x) - v_{A'}(x)) \dx.
  \end{align*}

 \noindent
  Let us now prove (iii). Taking into account part (ii),
  \begin{align*}
		&\int_0^1 |f(x)-v_{A'}(x)| \dx \\
		  = &\int_{[0,1]\setminus(\xnj\cup\xnk)} |f(x)-v_{A'}(x)| \dx + \int_{\xnj} |f(x)-v_{A'}(x)| \dx + \int_{\xnk} |f(x)-v_{A'}(x)| \dx \\
		 =& \int_{[0,1]\setminus(\xnj\cup\xnk)} |f(x)-v_{A}(x)| \dx + \int_{\xnj} (v_{A'}(x) - f(x) )\dx + \int_{\xnk}( f(x)-v_{A'}(x) )\dx \\
     =& \int_{[0,1]\setminus(\xnj\cup\xnk)} |f(x)-v_{A}(x)| \dx + \int_{\xnj}( v_{A}(x) - f(x)) \dx + \int_{\xnk}( f(x)-v_{A}(x) )\dx \\
    & \qquad  + \int_{\xnj} (v_{A'}(x) - v_{A}(x) )\dx + \int_{\xnk} (v_{A}(x) - v_{A'}(x)) \dx\\
     =& \int_{[0,1]\setminus(\xnj\cup\xnk)} |f(x)-v_{A}(x)| \dx + \int_{\xnj} |v_{A}(x) - f(x)| \dx + \int_{\xnk} |f(x)-v_{A}(x)| \dx \\
    & \qquad  - | A \setminus A' | - |A'\setminus A|\\
     = &\int_0^1 |f(x)-v_{A}(x)| \dx - |A\syd A'|.
	\end{align*}
	
	\noindent
(iv) Let us observe that, if we construct $A'=\sigma_{ijk}^n(A)$ from $A$, we change the content of the square $\qij$, which is the only change in the $j$-th column. By (\ref{E:s5}), we obtain that $\qij\cap A'\subseteq \qij\cap A$. Hence we get $v_A\geq v_{A'}\geq v_A-2^{-n}$ in $X_j^n$, where the last inequality holds since $2^{-n}$ is the height of the square. Then, condition (\ref{E:s1}) gives
$$v_A\geq v_{A'}\geq v_{A}-2^{-n}\geq f \ \text{in}\ X_j^n.$$
Similarly, by looking at the $k$-th column, since  $\qik\cap A\subseteq \qik\cap A'$, condition (\ref{E:s2}) gives
$$v_A\leq v_{A'}\leq v_{A}+2^{-n}\leq f \ \text{in}\ X_k^n.$$
In any other case, since if neither (\ref{E:s1}) nor (\ref{E:s2}) holds with $l$ in the role of $j$ or $k$, the $l$-th column remains unchanged, and thus $v_A=v_{A'}$ in $X_l^n$.
\end{proof}

\begin{remark}\label{optimset} 
Given $A$, $f,$ and $n$, let $\Omega_n$ be the collection of all sets $B$ obtained from $A$ by a finite number of swappings. It is obvious that $\Omega_n$  is finite, since, because of \eqref{E:s5},  no pair of squares can be swapped twice. Then $A_{{\rm opt},n} $ is defined as such an element of $\Omega_n$  that
$$
\|f -v_{A_{{\rm opt},n}}\|_1 = \min_{B\in \Omega_n} \|f - v_B\|_1. 
$$
\end{remark}

We need to recall the following classical result:

\begin{lemma}\label{T:seq}
	Let  $\{E_n\}_{n\in\N}$ be a~sequence of measurable subsets of $\R^d$ and let
		\begin{equation}\label{E:limit-E}
			E:= \bigcup_{m\in\N} \bigcap_{k=m}^\infty E_k
		\end{equation}
be its lower limit. 	If $|E|<\infty$ and
		\begin{equation}\label{E:meas-sum}
			\sum_{n\in\N} |E_n\setminus E_{n+1}| <\infty,
		\end{equation}
	then $E_n\to E$ in measure; i.e., $|E_n\syd E| \to 0$ as $n\to\infty$. 
	\end{lemma}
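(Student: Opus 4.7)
The plan is to decompose the symmetric difference as $|E_n \syd E| = |E_n \setminus E| + |E \setminus E_n|$ and control the two pieces separately using, respectively, the summability hypothesis \eqref{E:meas-sum} and the finiteness of $|E|$ together with continuity from below of the measure.

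For the first piece, I would use a \emph{first-departure} argument. If $x \in E_n \setminus E$, then $x \in E_n$ but $x$ fails to belong to $\bigcap_{k \ge m} E_k$ for every $m$; in particular, $x$ is not in $E_k$ for infinitely many $k \ge n$. Since $x \in E_n$, there must be a smallest index $k \ge n$ with $x \in E_k$ and $x \notin E_{k+1}$, so $x \in E_k \setminus E_{k+1}$. This shows
\[
    E_n \setminus E \;\subseteq\; \bigcup_{k=n}^{\infty} (E_k \setminus E_{k+1}),
\]
from which subadditivity and \eqref{E:meas-sum} give $|E_n \setminus E| \le \sum_{k \ge n} |E_k \setminus E_{k+1}| \to 0$ as $n \to \infty$ (tail of a convergent series).

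For the second piece, set $F_m := \bigcap_{k=m}^{\infty} E_k$, so that $\{F_m\}_{m\in\N}$ is an increasing sequence with $\bigcup_m F_m = E$. Since $|E| < \infty$, continuity from below yields $|F_m| \to |E|$, i.e.\ $|E \setminus F_m| \to 0$. Now, for any $n \ge m$, every point of $F_m$ lies in $E_n$, hence $F_m \subseteq E_n$ and consequently $E \setminus E_n \subseteq E \setminus F_m$. Therefore, given $\eps > 0$, I would first fix $m$ large enough that $|E \setminus F_m| < \eps$, and then obtain $|E \setminus E_n| \le |E \setminus F_m| < \eps$ for all $n \ge m$.

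Combining both pieces concludes that $|E_n \syd E| \to 0$. The only subtle points are the first-departure observation (which rules out points that would ``never decide'' to leave) and the role of $|E| < \infty$, which is essential for converting $F_m \nearrow E$ into $|E \setminus F_m| \to 0$; without it the second half of the argument would fail. Neither step is technically difficult, so I expect no substantial obstacle.
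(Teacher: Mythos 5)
Your proof is correct. The paper does not supply a proof of Lemma~\ref{T:seq}; it is invoked as a ``classical result'' and left to the reader, so there is no paper argument to compare against. For the record, your reasoning is the standard one and has no gaps: the first-departure containment
\[
E_n\setminus E \subseteq \bigcup_{k=n}^{\infty}\bigl(E_k\setminus E_{k+1}\bigr)
\]
is justified because if $x\in E_n$ and $x\in E_k\Rightarrow x\in E_{k+1}$ held for every $k\ge n$, then inductively $x\in\bigcap_{k\ge n}E_k\subseteq E$, a contradiction; and the second half uses exactly the two facts available, namely $F_m:=\bigcap_{k\ge m}E_k\nearrow E$ (continuity from below needs no finiteness) and $|E|<\infty$ to convert $|F_m|\to|E|$ into $|E\setminus F_m|\to 0$, after which $F_m\subseteq E_n$ for $n\ge m$ finishes it. The only cosmetic point is that you do not actually need the \emph{smallest} departing index $k$; any index $k\ge n$ with $x\in E_k\setminus E_{k+1}$ suffices, and its existence is exactly what the contradiction argument provides.
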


Similarly, the proof of the following result is straightforward and follows from the standard properties of the nonincreasing rearrangement of  a function:

\begin{lemma}\label{T:sep} 
  Let $u:[0,1]\to [0,\infty)$ be a~measurable function. Assume that there exist $0<p\leq q <1$ and constants $C_1\ge C_2>0$ such that
    $
      u(x)\ge C_1, 
    $
  for a.e.~$x\in[0,p)$, $C_2 \le u(x) \le C_1$, for a.e.~$x\in[p,q)$ and $u(x)\le C_2$,   for a.e.~$x\in[q,1]$. Then
    $$
      u^*(t) = (u\chi_{[0,p)})^*(t) \text{ for all } t\in[0,p)
    $$
     $$
      u^*(t) = (u\chi_{[p,q)})^*(t-p) \text{ for all } t\in[p,q)
    $$
  and
    $$
      u^*(t) = (u\chi_{[q,1]})^*(t-q) \text{ for all } t\in[q,1].
    $$
  Furthermore, as a~particular consequence, one has
       \begin{align}\label{E:cons}
      \int_0^p u^*(s)\ds = \int_0^p u(z)\dz, &\quad \int_p^q u^*(s)\ds = \int_p^q u(z)\dz,\\
            \text{ and }\quad  \int_q^1 u^*(s)\ds& = \int_q^1 u(z)\dz\nonumber.
\end{align}

\end{lemma}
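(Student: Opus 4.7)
The plan is to prove the three pointwise identities simultaneously by a single case analysis of the distribution function $\lambda_u$, guided by the following intuition: since the three pieces $u\chi_{[0,p)}$, $u\chi_{[p,q)}$ and $u\chi_{[q,1]}$ take values a.e.\ in the essentially disjoint bands $[C_1,\infty)$, $[C_2,C_1]$ and $[0,C_2]$ respectively, the nonincreasing rearrangement of $u$ is forced to be the concatenation, in decreasing order of magnitude, of the three piecewise rearrangements placed in the intervals $[0,p)$, $[p,q)$ and $[q,1]$.

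First I would compute $\lambda_u(s)$ by splitting along the critical levels $C_1$ and $C_2$. For $s\ge C_1$, only the first piece contributes, so $\lambda_u(s)=\lambda_{u\chi_{[0,p)}}(s)\le p$; for $s\in[C_2,C_1)$, the interval $[0,p)$ lies fully in $\{u>s\}$ while $[q,1]$ contributes nothing, yielding $\lambda_u(s)=p+\lambda_{u\chi_{[p,q)}}(s)\in[p,q]$; and for $s\in[0,C_2)$ one gets $\lambda_u(s)=q+\lambda_{u\chi_{[q,1]}}(s)\in[q,1]$. Next I would invoke the definition $u^*(t)=\inf\{s\ge 0:\lambda_u(s)\le t\}$ and verify that, in each of the three $t$-ranges, the level set $\{s:\lambda_u(s)\le t\}$ coincides with $\{s:\lambda_{u\chi_I}(s)\le t-a\}$ for the appropriate restricted function and shift $a\in\{0,p,q\}$. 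Given the decomposition above this is almost immediate: for $t<p$ the condition $\lambda_u(s)\le t$ forces $s\ge C_1$, where $\lambda_u=\lambda_{u\chi_{[0,p)}}$; for $p\le t<q$ it forces $s\ge C_2$, and on $[C_2,\infty)$ the identity $\lambda_u=p+\lambda_{u\chi_{[p,q)}}$ holds (trivially so on $[C_1,\infty)$, where both summands attain their trivial value); and for $t\ge q$ the entire half-line $[0,\infty)$ is admissible for $u^*$, while $\lambda_u=q+\lambda_{u\chi_{[q,1]}}$ on $[0,C_2)$ and both members are automatically $\le t$ on $[C_2,\infty)$.

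Finally, the integral identities in \eqref{E:cons} would follow by taking $t$ equal to the right endpoint of each range: since $(u\chi_E)^*$ is supported in $[0,|E|]$ and $\int_0^{|E|}(u\chi_E)^*(s)\,\ds=\int_E u(z)\,\dz$, the three pointwise equalities yield the three integral identities after a trivial change of variables on the middle and last intervals. The only delicate point, which I expect to be the main obstacle, is the boundary behavior at $s=C_1$ and $s=C_2$: the a.e.\ hypotheses allow $u$ to equal either threshold on a set of positive measure, so the additive decomposition of $\lambda_u$ could a priori fail at those single values. This ambiguity is harmless because $\lambda_u$ is right-continuous and the infima defining $u^*$ are unaffected by the value of $\lambda_u$ at a single point, so the level-set comparison above goes through without modification.
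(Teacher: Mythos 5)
Your proof follows the same route as the paper: split the distribution function $\lambda_u$ across the thresholds $C_1$ and $C_2$, show that on each band it reduces (up to a shift of $0$, $p$, $q$) to the distribution function of the relevant restriction, and read off $u^*$ on each $t$-interval directly from the definition; the only cosmetic difference is that the paper switches to the $\sup$-form $u^*(t)=\sup\{s\ge 0:\lambda_u(s)>t\}$ on the middle and last intervals, while you stay with the $\inf$-form throughout. One small inaccuracy to correct: the claim that $\lambda_u=p+\lambda_{u\chi_{[p,q)}}$ holds on all of $[C_2,\infty)$ is false on $[C_1,\infty)$, where $\lambda_u(s)=\lambda_{u\chi_{[0,p)}}(s)$ may be strictly less than $p$ while $p+\lambda_{u\chi_{[p,q)}}(s)=p$; this does not break the argument, since for $t\in[p,q)$ both $\lambda_u(s)\le t$ and $\lambda_{u\chi_{[p,q)}}(s)\le t-p$ hold automatically once $s\ge C_1$, so the level sets you compare still agree, but the statement as written should be repaired. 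Your observation about the boundary values $s=C_1,C_2$ is correct and is indeed already absorbed by the a.e.\ hypotheses.
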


\medskip

We will now prove our main result. To do this, we   first address the case of cross sections on $[0,1]$ (moreover, under some monotone conditions) and the general setting will then follow using Ryff's Theorem  and some measure preserving transformations (see \cite{ryff} and \cite[Theorem~II.7.5 and Corollary~II.7.6]{BS}).

\begin{theorem}\label{T:GR}
Let $(X,\mu)$ and $(Y,\nu)$ be two finite resonant measure spaces and let $f:X\rightarrow \mathbb R^+$ and $g:Y\rightarrow \mathbb R^+$ be measurable functions satisfying  $\|f\|_{L^1(\mu)}=\|g\|_{L^1(\nu)}$ and
    $$ 
      \int_0^t \f(s)\ds \le \int_0^t \lambda_g(s)\ds,
    $$ 
  for all $t>0$. Then, there exists a~measurable set $E\subset X\times Y$ such that $v_E(x)=f(x)$, $\mu$-a.e. $x\in X$ and $h_E(y)=g(y)$,   $\nu$-a.e. $y\in Y$.
\end{theorem}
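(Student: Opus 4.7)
The plan is to reduce to the model case $X=Y=[0,1]$ with Lebesgue measure and $f=f^*$, $g=g^*$ non-increasing (and, after rescaling, bounded by $1$), and then to build the set $E$ as the lower limit of an iterative dyadic swapping sequence. For the reduction, Ryff's Theorem provides measure-preserving maps $\sigma_X:X\to[0,\mu(X)]$ and $\sigma_Y:Y\to[0,\nu(Y)]$ with $f=f^*\circ\sigma_X$ and $g=g^*\circ\sigma_Y$ a.e., so that any $E_0\subset[0,1]^2$ satisfying $v_{E_0}=f^*$ and $h_{E_0}=g^*$ pulls back via $\sigma_X\times\sigma_Y$ to a set in $X\times Y$ realizing $f$ and $g$ as its vertical and horizontal cross sections.

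In the model case, take $A_0:=\{(x,y)\in[0,1]^2 : x<g(y)\}$. Since $g$ is non-increasing, $h_{A_0}=g$ and $v_{A_0}=\lambda_g$; as $\lambda_g$ is itself non-increasing, $v_{A_0}^*=\lambda_g$, and the standing hypothesis $\int_0^t f^*\ds\le\int_0^t\lambda_g\ds$ is precisely the admissibility condition of Definition~\ref{D:sw-sq} for $A_0$. Inductively set $A_n:=A_{\mathrm{opt},n}$ as in Remark~\ref{optimset}, obtained from $A_{n-1}$ by exhaustively performing all admissible swaps of generation-$n$ dyadic squares. By Lemma~\ref{propswap}(i) every swap preserves the horizontal cross section, so $h_{A_n}\equiv g$ throughout; by Lemma~\ref{propswap}(iii) every swap strictly decreases $\int_0^1|f-v_{A_n}|\dx$ by $|A\syd A'|$; and by the admissibility condition built into the definition, the inequality $\int_0^t f^*\ds\le\int_0^t v_{A_n}^*\ds$ is inherited by every $A_n$. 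Summing the decreases yields
$$
\sum_{n\in\N}|A_n\syd A_{n+1}|\le\int_0^1|f-v_{A_0}|\dx<\infty,
$$
so Lemma~\ref{T:seq} gives $A_n\to E_0:=\bigcup_m\bigcap_{k\ge m}A_k$ in measure. A Fubini argument converts measure convergence of sets into $L^1$-convergence of cross sections, yielding $h_{E_0}=g$ immediately.

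The main obstacle, and the heart of the argument, is to show $v_{E_0}=f$, equivalently that $\int_0^1|f-v_{A_n}|\dx\to 0$. The idea is that after completing stage $n$, no pair of generation-$n$ columns $\xnj,\xnk$ can simultaneously satisfy $v_{A_n}\ge f+2^{-n}$ a.e.~on $\xnj$ and $v_{A_n}\le f-2^{-n}$ a.e.~on $\xnk$ together with \eqref{E:s4}--\eqref{E:s5}, while Lemma~\ref{propswap}(iv) shows that the sign of $v_{A_n}-f$ is stable column-wise through the procedure. Consequently, the ``bad'' sets $U_n:=\{v_{A_n}\ge f+2^{-n}\}$ and $L_n:=\{v_{A_n}\le f-2^{-n}\}$ lie in unions of generation-$n$ columns that cannot be paired up; applying Lemma~\ref{T:sep} to the restrictions of $v_{A_n}$ to $U_n\cup L_n$ and using the preserved inequality $\int_0^t f^*\ds\le\int_0^t v_{A_n}^*\ds$ together with the equal-mass identity $\int v_{A_n}=\int f$ should force $|U_n|+|L_n|\to 0$, and thus $L^1$-convergence of $v_{A_n}$ to $f$. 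Translating the combinatorial stopping condition of the swap procedure into this quantitative decay is the delicate step that requires the most care.
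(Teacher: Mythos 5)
Your setup, the iterative dyadic swapping, the passage to the lower limit $E$, and the easy observations that $h_E=g$ a.e.\ and $\|v_E\|_1=\|f\|_1$ all match the paper's construction. The genuine gap is in the final and crucial step, $v_E=f$, which you sketch but do not prove and in fact explicitly flag as ``the delicate step that requires the most care.''

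The sketch you propose---that the stage-$n$ stopping condition should force the measures of the bad sets $U_n=\{v_{A_n}\ge f+2^{-n}\}$ and $L_n=\{v_{A_n}\le f-2^{-n}\}$ to decay---is a different route from the paper's and does not obviously lead to a proof. The stopping condition only says that for every candidate pair of columns $\xnj,\xnk$ and every row $i$, at least one of \eqref{E:s1}--\eqref{E:s5} fails; for a $j$ in $U_n$ and a $k$ in $L_n$ conditions \eqref{E:s1}, \eqref{E:s2} hold automatically, but \eqref{E:s4} and especially \eqref{E:s5} (the strict inclusion after shifting) can fail at any fixed $n$, and you give no quantitative control on when they do or how much bad measure this allows. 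The paper gets past this obstacle only by arguing by contradiction with the \emph{limit} set $E$: it assumes $v_E>f$ on a set of positive measure and derives a contradiction by (i) exploiting the structural fact that $E_n\cap Q^n_{ij}=E_0\cap Q^n_{i\pi_{ni}(j)}$ for a permutation $\pi_{ni}$, which makes $v_{E_n\cap Q^n_{ij}}$ nonincreasing on $\xnj$ and yields the implication \eqref{E:tele} converting a pointwise cross-section gap into the inclusion \eqref{E:s5}; (ii) fixing intervals $(a,b)$ and $(c,d)$ around level crossings of $v_E$ against $f$ and a margin $\varrho>0$, and only then taking $n$ large enough that the oscillation of $v_{E_n}$ on the relevant dyadic columns is small, $2^{-n-1}<\varepsilon$, and $\|v_{E_n}^*-v_E^*\|_1<\varrho$; and (iii) a lengthy case-by-case verification of \eqref{E:s4} on $[0,a),[a,b),[b,c),[c,d),[d,1)$ using Lemma~\ref{T:sep}. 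None of these ingredients appears in your sketch, and a stage-by-stage bound $|U_n|+|L_n|\to 0$ cannot be read off from the stopping condition at a fixed $n$ without this limiting machinery.
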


\begin{proof} We start by considering the case $X=Y=[0,1]$, endowed with the Lebesgue measure, and assume also that $f$ is a nonincreasing function. Now,   define $E_0$ by
     $$
      E_0:= \left\{ (x,y)\in[0,1]^2: x< g(y)\right\}
    $$
and  $E_n=(E_{n-1})_{{\rm opt},n}$, for each $n\in\N$, as in Remark~\ref{optimset}. 

\medskip

  Observe that since $v_{E_0}=\lambda_g$ in $[0,1]$, hence $v_{E_0}$ is nonincreasing and right-continuous, which in turn implies $v_{E_0} = v_{E_0}^*$ in $[0,1]$.
  Therefore we get
    $$
      \int_0^t \f(s) \ds \le \int_0^t \lambda_g(s)\ds = \int_0^t v_{E_0}^*(s)\ds,
    $$
  for all $t\in[0,1]$. Let $n\in\N$. Then, by the definition of $E_n$,
  \begin{equation}\label{E:fen}
		\int_0^t \f(s) \ds \le \int_0^t v_{E_{n}}^*(s)\ds.
	\end{equation}

\noindent
Using Lemma~\ref{propswap}\,(i), it immediately follows that 
\begin{equation}\label{equalfunct}
|E_n|=|E_{n-1}|, \ h_{E_n}=g \quad\text{a.e. in}\quad [0,1],
\end{equation}
and
\begin{equation}\label{equalnorms}
|E_n|=\|v_{E_n}\|_1=\|h_{E_n}\|_1=\|g\|_1 = \|f\|_1.
\end{equation} 
Furthermore, to fix the notation, suppose now that the set $E_n$ is constructed from $E_{n-1}$ as follows: there exists an $m\in\N$ and sets $A_0, \ldots, A_m$ such that $A_0 = E_{n-1}$, $A_m = E_n$ and $A_{l} = (A_{l-1})'$ for all $l\in\{1,\ldots,m\}$. Then, using  Lemma~\ref{propswap}\,(iii)

\begin{align*}	
		|E_{n-1}\syd E_n|  &\le \sum_{l=1}^m |A_{l-1}\syd A_l| = \sum_{l=1}^m (\|f-v_{A_{l-1}}\|_1 - \|f-v_{A_l}\|_1)\\
		& = \|f-v_{E_{n-1}}\|_1 - \|f-v_{E_n}\|_1.
			\end{align*}
Since the inequality $\|f-v_{A_{l-1}}\|_1 - \|f-v_{A_l}\|_1 > 0$ holds for all $l\in\{1,\ldots,m\}$ (see \eqref{E:s5} and Lemma~\ref{cond3lem}), we have also $\|f-v_{E_{n-1}}\|_1 - \|f-v_{E_n}\|_1 >0$.
Thus, we get
	\begin{align*}
		\sum_{n\in\N} |E_{n-1}\syd E_n| & = \lim_{N\to\infty} \sum_{n=1}^N |E_{n-1}\syd E_n| \\
		& \le \lim_{N\to\infty}  \sum_{n=1}^N \left( \|f-v_{E_{n-1}}\|_1 - \|f-v_{E_n}\|_1 \right)\\
		& = \lim_{N\to\infty} \left( \|f-v_{E_0}\|_1 - \|f-v_{E_N}\|_1 \right) \\
		& \le \|f-v_{E_0}\|_1 \le 1.
	\end{align*}
We finally define $E$ by \eqref{E:limit-E}. Since $E\subset [0,1]^2$ obviously holds, the assumptions of Lemma~\ref{T:seq} are satisfied and it follows that $E_n$ converges to $E$ in measure. Observe that \eqref{equalfunct} and \eqref{equalnorms} give us that  $h_E=g$, a.e. and $\|v_E\|_1=\|f\|_1$.

\medskip

It remains to prove $\|f-v_E\|_1=0$, which will be done in the rest of the proof. At first we need some auxiliary observations. We see that
\begin{equation}\label{norm1ve}
    \|v_{E_n}-v_E\|_1 \le \int_0^1 \int_0^1 |\chi_{E_n}(x,y)-\chi_{E}(x,y)|\dx\dy = |E_n\Delta E|\xrightarrow{n\to\infty} 0.
 \end{equation}
For every $x\in[0,1]$ let $\{j_n(x)\}_{n\in\N}$ be the unique sequence of natural numbers such that $x\in \xnjn$ for each $n\in\N$. Now let $n\in\N$ be fixed. By the construction of $E_n$, for every $i\in\{1,\ldots, 2^n\}$ there exists a~unique permutation $\pi_{ni}: \{1,\ldots, 2^n\} \to \{1,\ldots, 2^n\}$ such that
$E_n \cap \qij = E_0 \cap Q^n_{i\pi_{ni}(j)}$ for all $j\in\{1,\ldots, 2^n\}$. Hence, for $x\in X^n_j$ we have
  $$
    v_{E_n}(x) =  \sum_{i=1}^{2^n} v_{E_n \cap Q^n_{ij}}(x) =  \sum_{i=1}^{2^n} v_{E_0 \cap Q^n_{i\pi_{ni}(j)}}(x).
  $$
By the definition of $E_0$, one has
  $$
    v_{E_0 \cap Q^n_{i\pi_{ni}(j)}}(x) = v_{ \{ (u,z)\in Q^n_{i\pi_{ni}(j)}:u< g(z)\}}(x) = \int_{\yni} \chi_{\{ (u,z)\in Q^n_{i\pi_{ni}(j)}: u< g(z)\}} (x,y) \dy,
  $$
for every pair $i,j\in\{1,\ldots,2^n\}$. Since the above integrand is nonincreasing on the variable $x$, then $x\mapsto v_{E_n \cap Q^n_{ij_n(x)}}(x)=v_{E_0 \cap Q^n_{i\pi_{ni}(j_n(x))}}(x)$ is a~nonincreasing function on $\xnj$, for every $i$ and $j$, and thus $v_{E_n}$ is also nonincreasing on every $\xnj$, $j\in\{1,\ldots,2^n\}$.

It is worth mentioning that, for any $i,j,k \in\{1,\ldots,2^n\}$, the following implication is valid:
  \begin{equation}\label{E:tele}
    \lim_{z\to 2^{-n}j-} v_{E_n \cap \qij}(z) > v_{E_n \cap \qik}(2^{-n}(k-1)) \Longrightarrow  (E_n \cap \qik) - (2^{-n}(k-j),0) \subsetneq E_n \cap \qij.
  \end{equation}
This follows from the representation $E_n \cap \qij = E_0 \cap Q^n_{i\pi_{ni}(j)}$, $E_n \cap \qik = E_0 \cap Q^n_{i\pi_{ni}(k)}$, the definition of $E_0$, and Lemma~\ref{propswap} (iii). 

Next, we have
  \begin{align*}
    & \sum_{j=1}^{2^n} \left( v_{E_n}(2^{-n}(j-1)) - \lim_{z\to 2^{-n}j-} v_{E_n}(z) \right) \\
    & = \sum_{j=1}^{2^n} \sum_{i=1}^{2^n} \left( v_{E_n \cap \qij}(2^{-n}(j-1)) - \lim_{z\to 2^{-n}j-} v_{E_n \cap \qij }(z) \right) \\
    & = \sum_{j=1}^{2^n} \sum_{i=1}^{2^n} \left( v_{E_0 \cap Q^n_{i\pi_{ni}(j)}}(2^{-n}(\pi_{ni}(j)-1)) - \lim_{z\to 2^{-n}\pi_{ni}(j)-} v_{E_0 \cap Q^n_{i\pi_{ni}(j)} }(z) \right)\\
    &= \sum_{j=1}^{2^n} \sum_{i=1}^{2^n} \left( v_{E_0 \cap \qij}(2^{-n}(j-1)) - \lim_{z\to 2^{-n}j-} v_{E_0 \cap \qij }(z) \right),
      \end{align*}
      where the last equality holds by switching the order of the two sums,  rearranging all indices obtained when applying the permutation $\pi_{ni}$ to the set $j\in\{1,\dots,2^n\}$, and switching the indices $i$ and $j$ back again.      Continuing from here we obtain:
 \begin{align*}
     \sum_{j=1}^{2^n} &\left( v_{E_n}(2^{-n}(j-1)) - \lim_{z\to 2^{-n}j-} v_{E_n}(z) \right)\\
    &= \sum_{j=1}^{2^n} \sum_{i=1}^{2^n} \left( v_{E_0 \cap \qij}(2^{-n}(j-1)) - \lim_{z\to 2^{-n}j-} v_{E_0 \cap \qij }(z) \right) \\
    & = \sum_{j=1}^{2^n} \left( v_{E_0}(2^{-n}(j-1)) - \lim_{z\to 2^{-n}j-} v_{E_0}(z) \right) \\
    & = \sum_{j=1}^{2^n} \left( \lambda_g (2^{-n}(j-1)) - \lim_{z\to 2^{-n}j-} \lambda_g (z) \right) \\
    & \le \sum_{j=1}^{2^n} \left( \lambda_g (2^{-n}(j-1)) - \lambda_g (2^{-n}j) \right) \\
    & \le \lambda_g (0).
  \end{align*}
For every $n\in\N$ and $x\in[0,1]$ define
  $$
    \varphi_n(x) := v_{E_n}(2^{-n}(j_n(x)-1)) - \lim_{z\to 2^{-n}j_n(x)-} v_{E_n}(z).
  $$
As observed above, $v_{E_n}$ is nonincreasing on each $\xnj$, therefore $\varphi_n$ is nonnegative on $[0,1]$ and we have
  $$
    \|\varphi_n\|_1 = \sum_{j=1}^{2^n} 2^{-n} \left( v_{E_n}(2^{-n}(j-1)) - \lim_{z\to 2^{-n}j-} v_{E_n}(z) \right) \le 2^{-n} \lambda_g(0) \xrightarrow{n\to\infty} 0.
  $$
Hence, there exists a~subsequence $\{\varphi_{n_m}\}_{m\in\N}$ converging a.e.~in $[0,1]$. In particular, this implies that
  $$
      \liminf_{n\to\infty} \left( v_{E_n}(2^{-n}(j_n(x)-1)) - \lim_{z\to 2^{-n}j_n(x)-} v_{E_n}(z) \right) = 0
  $$
for a.e.~$x\in[0,1]$. By the monotonicity of $v_{E_n}$ on each $\xnj$, we therefore get
  \begin{equation}\label{E:slope}
      \liminf_{n\to\infty} \sup_{z\in X^n_{j(x)}} |v_{E_n}(x) - v_{E_n}(z)| = 0
  \end{equation}
for a.e.~$x\in[0,1]$.

Consequently, using Lemma~\ref{propswap}\,(iv),  for every $x\in[0,1]$ the sequence $\{v_{E_n}(x)\}_{n\in\N}$ is monotone.

Next, for any $n\in\N$ and $t\in[0,1]$ we have
  \begin{align}\label{E:uni}
     \int_0^t |v_{E_n}^*(s) \!-\! v_{E}^*(s)|\ds& \le \|v_{E_n}^*\!-\!v_E^*\|_1 = \|(v_{E_n}^*\!-\!v_E^*)^*\|_1\\
     & \le \|(v_{E_n}\!-\!v_E)^*\|_1 = \|v_{E_n}\!-\!v_E\|_1,\nonumber
  \end{align}
where the second inequality follows from the Lorentz-Shimogaki theorem \cite[Theorem~III.7.4, p.~169]{BS}.
Hence, taking the limit as $n\to\infty$ in \eqref{E:fen}, using \eqref{norm1ve} and \eqref{E:uni} give
  \begin{equation}\label{E:lim-ineq}
    \int_0^t \f(s) \ds \le \int_0^t v_{E}^*(s)\ds,
  \end{equation}
for any fixed $t\in[0,1]$.

Again, using \eqref{norm1ve} and the monotonicity of the sequence $\{v_{E_n}(x)\}_{n\in\N}$, then, the sequence of functions $v_{E_n}$ converges to $v_E$ a.e.~in $[0,1]$. Moreover, the pointwise limit in fact exists for every $x\in[0,1]$, and hence we may thus assume without loss of generality (modifying $E$ on a subset of measure zero, if necessary) that $v_{E_n}$ converges to $v_E$ everywhere in $[0,1]$. Thus, as a consequence of Lemma~\ref{propswap} (iv), we obtain
  \begin{equation}\label{E:mono-lim}
    f(z) \le v_E(z)\le v_{E_n}(z)\le v_{E_0}(z) \text{ \quad or \quad } f(z) \ge v_E(z) \ge v_{E_n}(z) \ge v_{E_0}(z)
  \end{equation}
for all $z\in[0,1]$ and $n\in\N$. In particular, this yields
  \begin{equation}\label{E:sandwich}
    \min\{f(x),v_{E_0}(x)\} \le v_E(x) \le \max\{f(x),v_{E_0}(x)\},\  \text{ for all }x\in[0,1].
  \end{equation}
Define by $S$ the set of all $x\in(0,1)$ such that both $f$ and $v_{E_0}$ are continuous in $x$ and \eqref{E:slope} holds. Since $f$ and $v_{E_0}$ are nonincreasing, and, as shown before, \eqref{E:slope} holds for a.e.~$x\in[0,1]$, it follows that $|[0,1]\setminus S|=0$.

Now we can return to the main point. We are going to show that $f(x)=v_{E}(x)$ for almost all $x\in S$, by contradiction. To do so, assume that the set
  $$
    S_0 := \{ x\in S:v_E(x)>f(x) \}
  $$
has positive measure.

The function $v_{E_0}$ is right-continuous. We may assume that $f$ is right-continuous, otherwise its values may be changed on a~null set. Hence, the level set $V_+:=\{x\in[0,1]: v_{E_0}(x)-f(x)>0\}$ is right-open in the sense that for every $x\in V_+$ we have $(x,x+\delta)\subset V_+$, for some $\delta>0$. By \eqref{E:mono-lim}, inequality $v_{E_0}(x)>f(x)$ holds whenever $v_{E}(x) > f(x)$. Hence $S_0 \subset V_+$. Thus, there exists an~interval $(a,b)\subset V_+\subset[0,1]$ such that
  \begin{alignat}{2}
     v_{E_0}  & \ge v_E  \ge f  &\text{ in }[a,b), \label{E:i1}\\
     \lim_{x\to a-} v_{E_0}(x) ) & \le \lim_{x\to a-}f(x),& \label{E:i2}\\
     v_{E_0}(b)  & \le f(b),& \label{E:i3}\\
 |S_0 \cap (a,b)|&  >0.& \label{E:i4}
  \end{alignat}
Notice that $(a,b)\subset[0,1]$ and hence $b<1$. If $a>0$, let $x\in[0,a)$ be arbitrary. Using \eqref{E:sandwich}, \eqref{E:i2} and monotonicity of $v_{E_0}$, we have
  $$
    v_{E}(x) \ge \min\{f(x),v_{E_0}(x)\} \ge \lim_{y\to a-} \min\{f(y),v_{E_0}(y)\} = \lim_{y\to a-} v_{E_0}(y) \ge v_{E_0}(a).
  $$
Similarly, for any $y\in[a,1]$ we have
  $$
    v_{E}(y) \le \max\{f(y),v_{E_0}(y)\}\le  \max\{f(a),v_{E_0}(a)\} = v_{E_0}(a).
  $$
Analogously, by \eqref{E:sandwich}, \eqref{E:i1} and \eqref{E:i3} we get, for any $y\in[0,b)$,
  $$
    v_{E}(y) \ge \min\{f(y),v_{E_0}(y)\} \ge \lim_{z\to b-} \min\{f(z),v_{E_0}(z)\} = \lim_{z\to b-} f(z) \ge f(b) \ge v_{E_0}(b).
  $$
For every $z\in[b,1]$ one has
  $$
    v_{E}(z) \le \max\{f(z),v_{E_0}(z)\}\le   \max\{f(b),v_{E_0}(b)\}  =f(b).
  $$
Hence, we have
  $$
    v_E(x) \ge v_{E_0}(a) \ge v_E(y) \ge  f(b) \ge v_E(z)
  $$
for all $x\in [0,a)$, $y\in[a,b)$ and $z\in [b,1]$. For the sake of correctness, we note that if $a=0$, the first term and inequality is simply omitted. Lemma \ref{T:sep} now gives
  $$
    v_E^*(t) =  (v_E\chi_{[b,1]})^*(t-b)\text{ \ for all \ } t\in[b,1],
  $$
and
  $$
    \int_a^b v_{E}^*(s)\ds =  \int_a^b v_{E}(y)\dy.
  $$
By \eqref{E:i4}, one has $v_E>f$ on a~subset of $(a,b)$ with positive measure. Thus, there exists a~$\varrho>0$ such that
  $$
    \int_a^b v_{E}(y)\dy > \int_a^b f(y)\dy + 4\varrho = \int_a^b \f(s)\ds + 4\varrho.
  $$
From this and \eqref{E:lim-ineq} used with $t=a$, we obtain
  \begin{align}\label{E:cruc}
    \int_0^b \f(s) \ds + 4\varrho &= \int_0^a \f(s) \ds + \int_a^b \f(s) \ds + 4\varrho\\
    & < \int_0^a v_{E}^*(s)\ds + \int_a^b v_{E}^*(s)\ds = \int_0^b v_{E}^*(s)\ds.\nonumber
  \end{align}
Define
  $$
    h:= \min\left\{ t\in(b,1): \int_0^t v_E^*(s)\ds \le \int_0^t \f(s)\ds + 3\varrho \right\}.
  $$
The minimum is indeed attained since the function $t\mapsto \int_0^t v_E^*(s)\ds - \int_0^t \f(s)\ds$ is continuous and  $\int_0^1 v_E^*(s)\ds  = \int_0^1 \f(s)\ds$, which is \eqref{equalnorms}.
Let us show that $f>v_E$ holds on a~subset of $(b,h)$ with positive measure. To do so, assume, for contradiction, that $f\le v_E$ a.e.~in $[b,h)$. Let $t\in[b,h)$. The assumption implies $(f\chi_{[b,h)})^*(t-b) \le (v_E\chi_{[b,h)})^*(t-b)$.
Since $f$ is nonincreasing, by Lemma \ref{T:sep} we have
  $$
     (f\chi_{[b,h)})^*(t-b) = (f\chi_{[b,1]})^*(t-b) = \f(t).
  $$
Consequently,
  $$
    v_E^*(t) = (v_E\chi_{[b,1]})^*(t-b) \ge (v_E\chi_{[b,h)})^*(t-b) \ge (f\chi_{[b,h)})^*(t-b) = \f(t).
   $$
Therefore we have $v_E^*(t) \ge \f(t)$ for all $t\in(b,h)$. However, this inequality together with \eqref{E:cruc} implies
  \begin{align*}
    \int_0^h v_E^*(s)\ds &= \int_0^b v_E^*(s)\ds + \int_b^h v_E^*(s)\ds\\
    & > \int_0^b \f(s)\ds + \int_b^h \f(s)\ds + 4\varrho = \int_0^h \f(s)\ds + 4\varrho.
 \end{align*}
This contradicts the definition of $h$.

We have shown that $|\{ y\in (b,h): f(y)>v_E(y) \}|>0$. Since $|[0,1]\setminus S|=0$, we have also $|\{ y\in (b,h)\cap S: f(y)>v_E(y) \}|>0$.
Thanks to this and \eqref{E:i4}, there exist points $x\in(a,b)\cap S_0$ and $y\in (b,h)\cap S$ such that
  \begin{equation}\label{eqe0f}
    v_E(x) > f(x) \ge f(y) > v_E(y)\ge v_{E_0}(y).
  \end{equation}
Define 
$$
\eps:= \frac14 \min \{ v_E(x)- f(x),\, f(y) - v_E(y) \}.
$$
Since $f$ and $v_{E_0}$ are both continuous in $x$ as well as in $y$ (recall the definition of $S$),  there exists a $\delta>0$ satisfying 
\begin{equation}\label{deltaincl}
\min\{x-a,2(b-x),y-b,1-y\}>\delta>0
 \end{equation}
 such that
  \begin{align}\label{first34}
    |v_{E_0}(z)-v_{E_0}(x)| < \eps \text{  and  } |f(z)-f(x)| < \eps  & \text{ for all }z\in(x-\delta, x+\delta),\nonumber\\[-.3cm] \\[-.3cm]
    |v_{E_0}(z)-v_{E_0}(y)| < \eps \text{  and  } |f(z)-f(y)| < \eps  & \text{ for all }z\in(y-\delta, y+\delta).\nonumber
  \end{align}
In particular, we get  $v_{E_0}>f$ and $f> v_{E_0}$ on $(x-\delta, x+\delta)$ and $(y-\delta, y+\delta)$, respectively. 

By \eqref{E:uni} we have, for $t\in[0,1]$,
  $$
    \int_0^t |v_{E_n}^*(s) - v_E^*(s)| \ds \xrightarrow{n\to\infty} 0
  $$
and this convergence is uniform in $t$. Moreover, since $x,y\in S$, both $x$ and $y$ satisfy \eqref{E:slope} (in the latter case with $y$ instead of $x$) and $f$ as well as $v_{E_0}$ are continuous in $x$ and $y$.
Based on these properties, there exists a~sufficiently large $n\in \N$ such that all the following conditions are satisfied:
 \begin{align}\label{second34}
    \sup_{z\in X^n_{j_n(x)}} |v_{E_n}(x) - v_{E_n}(z)| < \eps,& \quad  \sup_{z\in X^n_{j_n(y)}} |v_{E_n}(y) - v_{E_n}(z)| < \eps,\nonumber\\[-.4cm] \\[-.35cm]
    2^{-n-1}< \eps, \quad 2^{1-n}&<\delta, \quad 2^{1-2n}<\varrho,\nonumber
  \end{align}
and
  \begin{equation}\label{E:futral}
    \int_0^t |v_{E_n}^*(s)- v_E^*(s)| \ds < \varrho \quad \text{for all } t\in[0,1].
  \end{equation}
In the following, we will write $j:= j_n(x)$ and $k:= j_n(y)$ since $n$, $x$ and $y$ remain fixed.
By \eqref{E:mono-lim}, we have
  $$
    v_{E_0}(x)\ge v_{E_n}(x) \ge v_E(x) \ge f(x) + 4\eps.  $$
Let $z\in \xnj$ be arbitrary. We have
  \begin{align*}
    v_{E_n}(z) & \ge v_{E_n}(x) - |v_{E_n}(x) - v_{E_n}(z)| > v_{E_n}(x) - \eps = f(x) + v_{E_n}(x)  - f(x) - \eps \\
    & \ge f(x) + 3 \eps \ge f(2^{-n}(j-1))- |f(2^{-n}(j-1))-f(x)| + 3\eps\\
    & \ge f(2^{-n}(j-1))+2\eps.
  \end{align*}
Taking the limit (or infimum), we get
  \begin{equation}\label{E:dif-j}
    \inf_{z\in\xnj} v_{E_n}(z) =  \lim_{z\to 2^{-n}j-} v_{E_n}(z) \ge f(2^{-n}(j-1))+2\eps.
  \end{equation}
Proceeding analogously regarding the point $y$, we obtain
  \begin{equation}\label{E:dif-k}
    \inf_{z\in\xnk} f(z) = \lim_{z\to 2^{-n}k-} f(z) \ge v_{E_n}(2^{-n}(k-1))+2\eps.
  \end{equation}
Taking into account the inequalities
  $$
    f(2^{-n}(j-1)) \ge f(b) \ge  f(y)\ge \lim_{z\to 2^{-n}k-} f(z),
  $$
we have
  \begin{equation}\label{E:roz}
     \lim_{z\to 2^{-n}j-} v_{E_n}(z) \ge v_{E_n}(2^{-n}(k-1)) +  4\eps.
  \end{equation}

Recall that, for any $i\in\{1,\ldots,2^n\}$, the functions $v_{E_n\cap\qij}$ and $v_{E_n\cap \qik}$ are nonincreasing on $\xnj$ and $\xnk$, respectively.
Inequality \eqref{E:roz} thus yields
  $$
     \sum_{i=1}^{2^n} \lim_{z\to 2^{-n}j-} v_{E_n\cap\qij}(z) > \sum_{i=1}^{2^n} v_{E_n\cap\qik}(2^{-n}(k-1)).
  $$
Hence, there exists an $i\in\{1,\ldots,2^n\}$ such that
  \begin{equation}\label{E:pretele}
     \lim_{z\to 2^{-n}j-} v_{E_n\cap\qij}(z) > v_{E_n\cap\qik}(2^{-n}(k-1)).
  \end{equation}
If we are able to show that the squares $\qij$ and $\qik$ are swappable with respect to $E_n$ and $n$, we will obtain the ultimate contradiction with the construction of $E_n$.

Thus, we set $A:=E_n$ and $A':=\sigma_{ijk}^n(A)$ and want to show that \eqref{E:s1}--\eqref{E:s5} holds. By \eqref{E:pretele} and \eqref{E:tele} we immediately obtain condition \eqref{E:s5}. Since $2^{-n-1}<\eps$ was assumed and $f$ is nonincreasing, estimate \eqref{E:dif-j} implies
  $$
    \inf_{z\in\xnj} v_{E_n}(z) > \max_{z\in\xnj} f(z)+2^{-n},
  $$
Therefore \eqref{E:s1} is satisfied. Analogously, \eqref{E:dif-k} yields \eqref{E:s2}.
\medskip

It remains to prove \eqref{E:s4}. In this part, we will frequently use the inequality
  \begin{equation}\label{E:aa}
    \int_0^t v_A^*(s) \ds \ge \int_0^t \f(s)\ds \text{ \ for all } t\in[0,1],
  \end{equation}
 which obviously holds since $A = E_n$.

Since, by \eqref{eqe0f}, $v_{E_0}(y) < f(y)$, we can define $(c,d)$ as the maximal open subinterval of $(b,1)$ such that $y\in(c,d)$ and  $v_{E_0} < f$ in $(c,d)$. Moreover, using \eqref{first34} and \eqref{second34}, we have that 
\begin{equation}\label{subsetsint}
X^n_j\subset (a,b)\quad\text{ and }\quad X^n_k\subset (c,d).
\end{equation}
Keeping in mind that $A = E_n$, by applying \eqref{E:mono-lim} and \eqref{E:sandwich} to \eqref{E:i1}--\eqref{E:i3} we get the estimates
  \begin{equation}\label{E:ab}
    v_{A} \ge v_{E_0}(a) \text{ in } [0,a), \quad v_{E_0}(a)\ge v_{A} \ge f \ge  f(b) \text{ in } [a,b), \quad   f(b)\ge v_{A} \text{ in }[b,1].
  \end{equation}
Now we may proceed analogously with the interval $(c,d)$. Its maximality and \eqref{E:mono-lim} guarantee that
  \begin{equation}\label{E:cd}
   v_{A} \ge f(c) \text{ in } [0,c), \quad f(c) \ge f \ge   v_{A} \ge v_{E_0}(d)\text{ in } [c,d), \quad  v_{E_0}(d)\ge v_{A} \text{ in }[d,1].
  \end{equation}
 Notice that \eqref{E:ab} and \eqref{E:cd} hold also with $A'$ in place of $A$. To see this, recall that $A'=\sigma^n_{ijk}(A)$ and then use Lemma~\ref{propswap} (iv) and \eqref{subsetsint} which implies
  \begin{equation}\label{E:est-sgm}
    |v_A - v_{A'}| \le 2^{-n}(\chi_{\xnj} + \chi_{\xnk}).
  \end{equation}
Based on \eqref{E:ab}, \eqref{E:cd}  and their analogues with $A'$, by  Lemma \ref{T:sep}  we obtain:
  \begin{equation}\label{E:split}
    v_A^*(t) = (v_A\chi_{[p,q)})^*(t-p) \text{ \ and \ } v_{A'}^*(t) = (v_{A'}\chi_{[p,q)})^*(t-p) \text{ \ for \ } t\in[p,q),
  \end{equation}
whenever $[p,q)$ is one of the intervals $[0,a),[a,b),[b,c),[c,d),[d,1)$. If $a=0$ or $b=c$, the intervals $[0,a)$ or $[b,c)$ are, of course, not considered.

In particular, using \eqref{E:split} with $[p,q)=[a,b)$, $A'=\sigma^n_{ijk}(A)$ and  $\xnj\subset (a,b)$, which is \eqref{subsetsint}, we have
  \begin{align}\label{E:0b}
    \int_0^b v_{A'}^*(s)\ds& = \int_0^b v_{A'}(z)\dz = \int_0^b v_{A}(z) \dz - |A\setminus A'| \\
    &=  \int_0^b v_{A}^*(s)\ds - |A\setminus A'|.\nonumber
  \end{align}
Again, using \eqref{subsetsint} we have that $\xnj\subset (a,b)$ and  $\xnk \subset (c,d)$, and obtain
  \begin{equation}\label{E:eq}
     v_A = v_{A'} \text{ in } [0,a) \cup [b,c) \cup [d,1].
  \end{equation}
If $a>0$, let $t\in[0,a]$. By \eqref{E:eq}, \eqref{E:split} with $[p,q)=[0,a)$, and \eqref{E:aa}, we get
  \begin{equation}\label{E:fin}
    \int_0^t v_{A'}^*(s) \ds = \int_0^t v_{A}^*(s) \ds \ge \int_0^t \f(s) \ds.
  \end{equation}
Let $t\in[a,b]$. Since $v_{A'}\ge f$ in $[a,b)$ (recall that this is \eqref{E:ab} with $A$ replaced by $A'$), \eqref{E:split} with $[p,q)=[a,b)$, and \eqref{E:aa} provide
 \begin{align*}
    \int_0^t (v_{A'}^*(s)-\f(s)) \ds& = \int_0^a (v_{A'}^*(s)-\f(s)) \ds \\
    &\qquad+ \int_a^t ((v_{A'}\chi_{(a,b)})^*(s-a)-(f\chi_{(a,b)})^*(s-a)) \ds  \ge 0,
\end{align*}
thus \eqref{E:fin} holds for $t\in[a,b]$.

If $b<c$, let $t\in[b,c]$. Since $ c<h$, the definition of $h$ and \eqref{E:futral} yield
  $$
     \int_0^t  v_{A}^*(s)\ds > \int_0^t \f(s)\ds + 2\varrho.
  $$
Property \eqref{E:eq} yields $(v_{A'}\chi_{[b,c)})^*= (v_{A}\chi_{[b,c)})^*$. Considering \eqref{E:split} with $[p,q)=[b,c)$, \eqref{E:0b}, the assumption $\varrho > 2^{-2n}$, and $\eqref{E:aa}$, one has
  \begin{align*}
    \int_0^t  v_{A'}^*(s)\ds & = \int_0^b  v_{A'}^*(s)\ds + \int_b^t  v_{A'}^*(s)\ds
     = \int_0^b v_{A'}(z)\dz + \int_b^t  (v_{A'}\chi_{[b,c)})^*(s-b) \ds\\
    & = \int_0^b v_{A}(z)\dz - |A\setminus A'| + \int_b^t  (v_{A}\chi_{[b,c)})^*(s-b) \ds \\
    & = \int_0^t v_A^*(s) \ds - |A\setminus A'| > \int_0^t \f(s)\ds + 2\varrho - |A\setminus A'| \\
    & \ge \int_0^t \f(s)\ds + 2\varrho - 2^{-2n}
     > \int_0^t \f(s)\ds + \varrho.
  \end{align*}
Hence, \eqref{E:fin} is satisfied for $t\in[b,c]$. In particular, the previous calculation gives
  $$ 
    \int_0^c v_{A'}^*(s)\ds = \int_0^c v_A^*(s) \ds - |A\setminus A'|.
  $$ 
Moreover, since $A'=\sigma^n_{ijk}(A)$ and $\xnk \subset (c,d)$, by \eqref{E:split} with $[p,q)=[c,d)$ we have
  $$
    \int_c^d v_{A'}^*(s)\ds = \int_c^d v_{A'}(z)\dz = \int_c^d v_A(z)\dz +  |A'\setminus A| = \int_c^d v_{A}^*(s)\ds + |A'\setminus A|.
  $$
Thanks to these two relations, we have
  \begin{equation}\label{E:eq-d}
    \int_0^d v_{A'}^*(s)\ds = \int_0^c v_{A'}^*(s)\ds + \int_c^d v_{A'}^*(s)\ds = \int_0^d v_A^*(s) \ds.
  \end{equation}
Let $t\in[c,d]$. By  \eqref{E:est-sgm} it follows that $f\ge v_{A'}$ holds in~$[c,d)$,
 and  therefore $(f\chi_{[c,d)})^* \ge (v_{A'}\chi_{[c,d)})^*$. Applying this to \eqref{E:split} with $[p,q)=[c,d)$, one has $\f \ge v^*_{A'}$ in $[c,d)$. Using this fact, \eqref{E:eq-d} and \eqref{E:aa}, we get
  \begin{align*}
    \int_0^t v_{A'}^*(s)\ds & = \int_0^d v_{A'}^*(s)\ds + \int_t^d (\f(s) - v_{A'}^*(s)) \ds - \int_t^d \f(s)\ds \\
    & \ge \int_0^d v_A^*(s)\ds - \int_t^d \f(s)\ds \ge \int_0^d \f(s)\ds - \int_t^d \f(s)\ds\\
    & = \int_0^t \f(s) \ds.
  \end{align*}
In other words, \eqref{E:fin} holds for $t\in[c,d]$.

If $d<1$, let $t\in[d,1]$. By \eqref{E:eq} we have $(v_{A'}\chi_{(d,1)})^* = (v_{A}\chi_{(d,1)})^*$ which together with \eqref{E:split} for $[p,q)=[d,1)$, \eqref{E:eq-d} and \eqref{E:aa} provides
  \begin{align*}
    \int_0^t v_{A'}^*(s) \ds & = \int_0^d v_{A'}^*(s) \ds + \int_d^t v_{A'}^*(s) \ds = \int_0^d v_{A}^*(s) \ds + \int_d^t (v_{A'}\chi_{(d,1)})^*(s) \ds \\
    & =  \int_0^d v_{A}^*(s) \ds + \int_d^t (v_{A}\chi_{(d,1)})^*(s) \ds = \int_0^t v_{A}^*(s) \ds \ge \int_0^t \f(s)\ds.
  \end{align*}
This shows that \eqref{E:fin} is satisfied for $t\in[d,1]$. If $d=1$, this step is omitted.

Altogether we have shown that \eqref{E:fin} is valid for all $t\in[0,1]$, hence condition \eqref{E:s4} is satisfied. Thus, all conditions \eqref{E:s1}--\eqref{E:s5} are met, which means that $\qij$ and $\qik$ are swappable with respect to $A$ and $n$.

Let us summarize what we have proven at this point. Having started with the assumption $|\{ x\in S: v_E(x)>f(x) \}|>0$, we have found a~set $E_n$ such that there exist swappable squares $\qij$, $\qik$ with respect to $E_n$ and $n$. This contradicts the construction of $E_n$, as explained in Remark~\ref{optimset}.

Recalling that $|[0,1]\setminus S|=0$, we have therefore $v_E\le f$ a.e.~in $[0,1]$. Since $\|v_E\|_1=\|f\|_1$ and both functions are nonnegative, it follows that $v_E = f$ a.e.~in $[0,1]$. 

\medskip
The proof is now complete, under the conditions $X=Y=[0,1]$ and $f$ nonincreasing. If $X$ and $Y$ are general finite resonant measure spaces (we can assume, without loss of generality, that they are probability spaces), we use Ryff's Theorem  \cite[Theorem~II.7.5 and Corollary~II.7.6]{BS} to find a couple of measure preserving transformations
$$
\sigma_\mu: X\rightarrow [0,1]\quad\text{ and }\quad\sigma_\nu: Y\rightarrow [0,1],
$$
such that $f=f^*\circ\sigma_\mu$, $\mu$-a.e. and $g=g^*\circ\sigma_\nu$, $\nu$-a.e. We now apply the previous case to the functions $f^*$ and $g^*$ on $[0,1]$ (observe that $f^*$ is nonincreasing and the hypothesis on the primitives of $f^*$ and $\lambda_{g^*}$ trivially holds), to obtain a set $\widehat E\subset [0,1]^2$  such that $h_{\widehat E}=g^*$ and $v_{\widehat E}=f^*$, a.e. We now define the measurable set $E\subset X\times Y$ as follows:
$$
E=\{(x,y)\in X\times Y: (\sigma_\mu(x),\sigma_\nu(y))\in\widehat E\}.
$$
Then,  using \cite[Proposition~II.7.2]{BS}
\begin{align*}
h_E(y)&=\int_X\chi_E(x,y)\,d\mu(x)=\int_X\chi_{\widehat E}(\sigma_\mu(x),\sigma_\nu(y))\,d\mu(x)\\
&=\int_0^1\chi_{\widehat E}(s,\sigma_\nu(y))\ds=h_{\widehat E}(\sigma_\nu(y))=g^*((\sigma_\nu(y)))=g(y),\quad \nu\text{-a.e.}
\end{align*}
Analogously, one can prove that $v_E(x)=f(x)$, $\mu$-a.e.
\end{proof}

\begin{remark}
As   direct consequences of Theorem~\ref{T:GR}, with the Lebesgue measure on $\mathbb R$ and the counting measures on finite sets, we recover Lorentz's result, Theorem~\ref{planeset}, as well as Gale--Ryser's Theorem~\ref{gary}        for matrices in the discrete setting.
\end{remark}
\medskip

\noindent
\textbf{Acknowledgment.} We would like to thank the anonymous referee for some important comments and clarifications, which have greatly improved the final exposition of this work.

\medskip

\end{document}